\theoremstyle{definition}
\newtheorem{theorem}{Theorem}[section]
\newtheorem{lemma}[theorem]{Lemma}
\newtheorem{corollary}[theorem]{Corollary}
\newtheorem{conjecture}[theorem]{Conjecture}
\newtheorem{note}[theorem]{Note}
\newtheorem{prop}[theorem]{Proposition}
\newtheorem{definition}[theorem]{Definition}
\newtheorem{example}[theorem]{Example}
\theoremstyle{remark}
\numberwithin{equation}{section}
\newcommand{\realpart}{\mathop{\rm Re}\nolimits}
\newcommand{\ba}{\begin{eqnarray}}
\newcommand{\ea}{\end{eqnarray}}
\newcommand{\lf}{\left\lfloor}
\newcommand{\rf}{\right\rfloor}
\newcommand{\pFq}[5]{\ensuremath{{}_{#1}F_{#2} \left( \genfrac{}{}{0pt}{}{#3}
{#4} \bigg| {#5} \right)}}
\begin{document}

\title[Hypergeometric zeta function]
{Recursion rules for the hypergeometric zeta function}

\author[A. Byrnes]{Alyssa Byrnes}
\address{Department of Mathematics,
Tulane University, New Orleans, LA 70118}
\email{abyrnes1@tulane.edu}

\author[L Jiu]{Lin Jiu}
\address{Department of Mathematics,
Tulane University, New Orleans, LA 70118}
\email{ljiu@tulane.edu}

\author[V. Moll]{Victor H. Moll}
\address{Department of Mathematics,
Tulane University, New Orleans, LA 70118}
\email{vhm@math.tulane.edu}

\author[C. Vignat]{Christophe Vignat}
\address{Information Theory Laboratory, E.P.F.L., 1015 Lausanne, Switzerland}
\email{christophe.vignat@epfl.ch}
\address{Department of Mathematics,
Tulane University, New Orleans, LA 70118}
\email{cvignat@tulane.edu}

%    General info
\subjclass{Primary 11B83,  Secondary 11B68,60C05}

\date{\today}

\keywords{zeta functions, hypergeometric functions, beta random variables, conjugate random variables}

\begin{abstract}
The hypergeometric zeta function is defined in terms of the zeros of the 
Kummer function $M(a,a+b;z)$. It is established that this function is 
an entire function of order $1$. The classical factorization theorem of 
Hadamard gives an expression as an infinite product. This provides linear 
and quadratic recurrences for the hypergeometric 
zeta function. A family of associated 
polynomials is characterized as Appell polynomials and the underlying 
distribution is given explicitly in terms of the zeros of the associated 
hypergeometric function. These properties are also given a probabilistic 
interpretation in the framework of Beta distributions.
\end{abstract}

\maketitle

%\textwidth=6in
%\textheight=8.0in
%\topmargin=-\headheight
%\oddsidemargin=0in
%\evensidemargin=0in
%\def\Tilde{\char126\relax}

\vskip 20pt 

\section{Introduction} 
\label{S:intro} 

The zeta function attached to a 
collection of non-zero complex numbers 
\newline
$\mathbb{A} = \{ a_{n} \neq 0 : \, n \in \mathbb{N} \}$, is defined by 
\begin{equation}
\zeta_{\mathbb{A}}(s) = \sum_{n=1}^{\infty} \frac{1}{a_{n}^{s}}, 
\text{ for } \realpart{s} > c. 
\end{equation}
\noindent
The most common choice of sequences $\mathbb{A}$ includes those coming from 
the zeros of a given function $f$:
\begin{equation}
\mathbb{A}(f) = \{ z \in \mathbb{C}: \, f(z) = 0 \} = 
\{ z_{n} \in \mathbb{C}: f(z_{n}) = 0, \quad  \, n \in 
\mathbb{N} \},
\end{equation}
\noindent
to produce the associated zeta function 
\begin{equation}
\zeta_{f}(s) = \sum_{n=1}^{\infty} \frac{1}{z_{n}^{s}}.
\end{equation}

The prototypical example is the classical Riemann zeta function 
\begin{equation}
\zeta(s)  = \sum_{n=1}^{\infty} \frac{1}{n^{s}},
\label{zeta-def}
\end{equation}
\noindent
coming from (half of) the zeros 
$\mathbb{A} = \{ z_{n} = n > 0 \}$ of the function 
$\begin{displaystyle} f(z) = \frac{\sin \pi z}{\pi z} \end{displaystyle}$. 

The literature contains a variety of zeta functions $\zeta_{\mathbb{A}}$ and 
their study is concentrated in reproducing the basic properties of 
\eqref{zeta-def}. For example $\zeta(s)$, originally defined in the 
half-plane $\realpart{s} > 1$, admits a meromorphic extension to the 
complex plane, with a single pole at $s=1$. Moreover, the function $\zeta(s)$ 
admits special values
\begin{equation}
\zeta(2n) = (-1)^{n+1} \frac{(2 \pi)^{2n}}{2(2n)!} B_{2n}
\end{equation}
\noindent
where the Bernoulli numbers $B_{n}$ are defined by the generating function 
\begin{equation}
\frac{t}{e^{t}-1} = \sum_{n=0}^{\infty} B_{n} \frac{t^{n}}{n!}, \quad 
|t| < 2 \pi.
\label{gen-ber}
\end{equation}

Carlitz introduced in \cite{carlitz-1961b} coefficients $\beta_{n}$ by 
\begin{equation}
\frac{x^{2}}{e^{x}-1-x} = \sum_{n=0}^{\infty} \beta_{n}\frac{x^{n}}{n!}
\label{carlitz-61}
\end{equation}
\noindent
and stated that \textit{nothing is known about them}. Howard 
\cite{howard-1967b} used the notation \newline $A_{s} = \tfrac{1}{2}\beta_{s}$, 
%and established some congruences for these numbers, such as 
%\begin{equation}
%2A_{n} \equiv 1 \bmod 4, \text{ for } n > 1
%\end{equation}
%\noindent
%and also some recurrences, such as 
%\begin{equation}
%\sum_{s=0}^{n} \binom{n+2}{s} A_{s} = 0.
%\end{equation}
%\noindent
%It follows that $A_{n}$ is a rational number with denominator divisible by
%$2$ but not by $4$. 
and in \cite{howard-1967a} he introduced the generalization 
$A_{k,r}$ by 
\begin{equation}
\frac{x^{k}}{k!} \left( e^{x} - \sum_{s=0}^{k-1} \frac{x^{s}}{s!} \right)^{-1} 
= \sum_{r=0}^{\infty} A_{k,r} \frac{x^{r}}{r!}. 
\label{howard-gen}
\end{equation}
\noindent
These numbers satisfy the recurrence 
\begin{equation}
\sum_{r=0}^{n} \binom{n+k}{r} A_{k,r} = 0, \text{ for } n > 0
\end{equation}
\noindent 
with  $A_{k,0}=1$. It follows that $A_{k,n}$ is a rational number and some of 
their arithmetical properties are reviewed in Section \ref{sec-arithmetical}.

The work presented here considers a zeta function constructed in terms of 
the \textit{Kummer function}
\begin{equation}
M(a,b;z) = \pFq11{a}{b}{z} = 
\sum_{k=0}^{\infty} \frac{(a)_{k}}{(b)_{k}} \frac{z^{k}}{k!}.
\label{kummer-1-def}
\end{equation}

The next definition introduces the main function considered here. The 
notation 
\begin{equation}
\Phi_{a,b}(z) = \pFq11{a}{a+b}{z} = M(a,a+b;z), \text{ for } a, \, b  
\in \mathbb{R}
\label{def-kummer}
\end{equation}
\noindent
is employed throughout. 

\begin{definition}
Let $a, \, b$ be positive real  numbers. The 
\textit{hypergeometric zeta function} is defined by 
\begin{equation}
\zeta^{H}_{a,b}(s) = \sum_{k=1}^{\infty} \frac{1}{z_{k;a,b}^{s}}
\text{ for } \realpart{s} > 1,
\label{hypergeom-def}
\end{equation}
\noindent
where $z_{k;a,b}$ is the sequence of complex zeros of the function 
$\Phi_{a,b}(z)$.
\end{definition}

\medskip

The special case $\Phi_{1,1}(z) = (e^{z}-1)/z$ is the reciprocal of the 
generating function for the Bernoulli numbers \eqref{gen-ber}. The 
coefficients $B_{n}^{(b)}$ are defined by
\begin{equation}
\frac{1}{\Phi_{1,b}(z)} = \sum_{n=0}^{\infty} 
B_{n}^{(b)} \frac{z^{n}}{n!}.
\label{def-Phi}
\end{equation}
\noindent
In the case $b \in \mathbb{N}$, these numbers are the coefficients 
$A_{k,r}$ defined by Howard in \eqref{howard-gen} (with $k=b$ and $r=n$). 
These numbers are discussed in  
Section \ref{sec-hyperB}. The function $\Phi_{1,2}(z)$ recently appeared  in 
\cite{deangelis-2009a} in the asymptotic expansion of $n!$. Indeed, it 
can be shown that the 
coefficients $a_{k}$ in the expansion 
\begin{equation}
n! \sim \frac{n^{n} \sqrt{2 \pi n}}{e^{n}} 
\sum_{k=0}^{\infty} \frac{a_{k}}{n^{k}} \text{ as } n \to \infty, 
\end{equation}
\noindent
are given by 
\begin{equation}
a_{k} = \frac{1}{2^{k}k!} 
\left( \frac{d}{dz} \right)^{2k} \Phi_{1,2}^{-(k+1/2)} \Big{|}_{z=0}.
\end{equation}
\noindent
K. Dilcher \cite{dilcher-2002a,dilcher-2002b} considered the zeta function 
$\zeta_{a,b}^{H}$. In particular, he established an expression for 
$\zeta_{a,b}^{H}(m)$, for $a, \, b, \, m \in \mathbb{N}$, in 
terms of the hypergeometric Bernoulli numbers
$B_{a,b}^{n}$ introduced in Section \ref{sec-generalized}.

\begin{note}
The many examples of zeta functions discussed in the literature include 
the \textit{Bessel zeta function}
\begin{equation}
\zeta_{\textit{Bes},a}(s)  = \sum_{n=1}^{\infty} \frac{1}{j_{a,n}^{s}}
\end{equation}
\noindent
where $\{ j_{a,n}\}$ are the zeros of $J_{a}(z)/z^{a}$, with $J_{a}(z)$ the 
Bessel function of the first kind
\begin{equation}
J_{a}(z) = \sum_{m=0}^{\infty} \frac{(-1)^{m}}{m! \Gamma(m + a + 1) }
\left( \frac{z}{2} \right)^{2m+a}. 
\end{equation}
\noindent
Papers considering $\zeta_{\textit{Bes},a}$ include
\cite{actor-1996a,elizalde-1995a,hawkins-1983a,stolarski-1985a}. A second 
example is the \textit{Airy-zeta function}, defined by 
\begin{equation}
\zeta_{Ai}(s) = \sum_{n=1}^{\infty} \frac{1}{a_{n}^{s}},
\end{equation}
\noindent
where $\{ a_{n} \}$ are the zeros of the Airy function 
\begin{equation}
\textit{Ai}(x) = \frac{1}{\pi} \int_{0}^{\infty} \cos \left( 
\tfrac{1}{3}t^{3} + xt \right) \, dt.
\end{equation}
\noindent 
This is considered by R. Crandall \cite{crandall-1996a} in the 
so-called \textit{quantum bouncer}. Special values include the remarkable
\begin{equation}
\zeta_{Ai}(2) = \frac{3^{5/3}}{4 \pi^{2}} \Gamma^{4} \left( \frac{2}{3} 
\right).
\end{equation}
\noindent
A third example is the zeta function
studied by A. Hassen and H. Nguyen \cite{hassen-2008a,hassen-2010a}. 
This is defined by the integral 
\begin{equation}
\zeta_{HN,b}(s) = \frac{1}{\Gamma(s+b-1)} \int_{0}^{\infty} 
\frac{x^{s+b-2} \, dx}{e^{x} - 1-z-z^{2}/2!- \cdots - z^{b-1}/(b-1)!}.
\label{zeta-has1}
\end{equation}
\end{note}

\medskip

The main results presented here include a relation among the Kummer function
$\Phi_{a,b}(z)$ and the hypergeometric zeta function $\zeta_{a,b}^{H}$ 
defined in \eqref{hypergeom-def}. This is 
\begin{equation}
\frac{\Phi_{a,b+1}(z)}{\Phi_{a,b}(z)} = 1 + \frac{a+b}{b} 
\sum_{k=1}^{\infty} \zeta_{a,b}^{H}(k+1)z^{k}
\end{equation}
\noindent
and it appears in Proposition \ref{kummer-90}. It is shown that the 
function $\zeta_{a,b}^{H}$
satisfies a couple of linear recurrence relations: 
\begin{equation}
\sum_{\ell=1}^{p} B(a+p-\ell,b) \frac{p!}{(p- \ell)!} 
\zeta_{a,b}^{H}(\ell+1) = 
- \frac{bp}{(a+b)(a+b+p)}B(a+p,b).
\label{linear-rec-1}
\end{equation}
\noindent
appearing in Theorem \ref{thm-linear} and 
\begin{equation}
(n-1)! \sum_{j=2}^{n} \frac{B_{n-j}^{(a,b)}}{(n-j)!} \zeta_{a,b}^{H}(j) = 
\frac{a}{a+b} B_{n-1}^{(a,b)} + B_{n}^{(a,b)},
\end{equation}
\noindent
established in Theorem \ref{thm-linear-2}, where $B_{n}^{(a,b)}$ are the 
so-called \textit{hypergeometric Bernoulli numbers} introduced in 
Section \ref{sec-generalized}. The third recurrence is quadratic
\begin{equation}
\sum_{k=1}^{p} \zeta_{a,b}^{H}(k+1)\zeta_{a,b}^{H}(p-k+1) = 
(a+b+p+1) \zeta_{a,b}^{H}(p+2) + \left( \frac{a-b}{a+b} \right) 
\zeta_{a,b}^{H}(p+1).
\end{equation}
\noindent
This is given in Theorem \ref{thm-quadratic}.

Theorem \ref{bern-zeta1} expresses the rational numbers $B_{n}^{(b)} := 
B_{n}^{(1,b)}$ in terms of the 
values $\zeta_{1,b}^{H}(n)$. This extends the classical result for the 
Bernoulli numbers $B_{n} = B_{n}^{(1,1)}$. Section
\ref{sec-arithmetical} states 
some conjectures on arithmetical properties of the denominators of 
$B_{n}^{(b)}$ extending the von Staudt-Clausen theorem for Bernoulli
numbers. Section \ref{sec-prob} introduces a probabilistic technique to
approach these questions and Section \ref{sec-generalized} discusses a family
of polynomials introduced by K. Dilcher and proposes a natural generalization.

This work provides linear identities linking three types of functions: the 
classical beta function, the hypergeometric Bernoulli polynomials and the 
hypergeometric zeta function. Explicitly, Theorem \ref{thm-linear} gives a
linear recurrence involving the beta function and the hypergeometric zeta 
function, Theorem \ref{thm-binomials} gives a linear recurrence involving the 
beta function (written as binomial coefficients) and the hypergeometric 
Bernoulli polynomials and, finally, Theorem \ref{thm-linear-2} gives a relation
between the hypergeometric Bernoulli numbers and the hypergeometric zeta
function. 

\medskip

\noindent
\textbf{Notation}. It is an unfortunate fact that many of the terms used in 
the present work are denoted by the letter $B$. The list below shows the 
symbols employed here. 

\medskip

\begin{align*}
&B_{n}  & \text{Bernoulli number}   & \quad \eqref{gen-ber} \\
&M(a,b;z) &   \text{Kummer function } &  \quad \eqref{kummer-1-def} \\
&\Phi_{a,b}(z) &   \text{Kummer function } & \quad \eqref{def-kummer} \\
&\zeta_{a,b}^{H}(s) &  \text{hypergeometric zeta function } & 
\quad \eqref{hypergeom-def} \\
&B_{n}^{(b)}  & \text{hypergeometric Bernoulli number } & 
\quad \eqref{def-Phi} \\
&B(a,b) & \text{the beta function} & 
\quad \eqref{beta-def} \\
&\mathfrak{B}_{a,b} &  \text{a beta distributed random variable}
& \quad \eqref{def-beta11} \\
&\mathfrak{Z}_{a,b} & \text{a complex random variable} & 
\quad \eqref{Z-stoch} \\
&B_{n}^{(a,b)}(x) &  \text{hypergeometric Bernoulli polynomial} & 
\quad \eqref{hyp-ber-poly-def} \\
&B_{n}^{(a,b)} &     \text{hypergeometric Bernoulli number} & 
\quad \eqref{hyp-ber-numb-def} 
\end{align*}

\section{Properties of the Kummer function $\Phi_{a,b}(z)$.} 
\label{sec-kummer} 

The function 
\begin{equation}
\Phi_{a,b}(z) = \pFq11{a}{a+b}{z} = M(a,a+b;z), \text{ for } a, \, b  
\in \mathbb{R}
\label{def-kummer1}
\end{equation}
\noindent
defined in terms of the Kummer function $M(a,b;z)$ is the main object 
considered in the present work. The function $M(a,b;z)$ satisfies the 
differential equation
\begin{equation}
z \frac{d^{2}M}{dz^{2}} + (b-z) \frac{dM}{dz} - a M = 0,
\end{equation}
\noindent
obtained from the standard hypergeometric equation
\begin{equation}
z \frac{dw^{2}}{dz^{2}} + \left[ c - (a+b+1)z \right] \frac{dw}{dz} - 
abw = 0
\end{equation}
\noindent
by scaling $z \mapsto z/b$, letting $b \to \infty$ and replacing the 
parameter $c$ by $b$. 

The first result shows that the special 
case $a=1$ gives the function considered by Howard \cite{howard-1967a}.

\begin{theorem}
\label{thm-phi1b}
For $b \in \mathbb{N}$, the function $\Phi_{1,b}(z)$ is given by 
\begin{equation}
\Phi_{1,b}(z) = 
\frac{b!}{z^{b}} \left( e^{z} - \sum_{k=0}^{b-1} \frac{z^{k}}{k!} \right).
\end{equation}
\end{theorem}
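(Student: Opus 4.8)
The plan is to prove the identity by manipulating the defining hypergeometric series for $\Phi_{1,b}(z) = M(1,b+1;z)$ and recognizing the right-hand side as the same power series. First I would write out
\[
\Phi_{1,b}(z) = M(1,b+1;z) = \sum_{k=0}^{\infty} \frac{(1)_{k}}{(b+1)_{k}} \frac{z^{k}}{k!},
\]
and simplify the coefficients. Since $(1)_{k} = k!$ and $(b+1)_{k} = (b+1)(b+2)\cdots(b+k) = (b+k)!/b!$, the $k$-th coefficient collapses to $b!/(b+k)!$, so that $\Phi_{1,b}(z) = b!\sum_{k=0}^{\infty} z^{k}/(b+k)!$.

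Next I would transform the right-hand side into the same form. Starting from $e^{z} - \sum_{j=0}^{b-1} z^{j}/j! = \sum_{j=b}^{\infty} z^{j}/j!$, I would reindex with $j = b+k$ to get $\sum_{k=0}^{\infty} z^{b+k}/(b+k)!$. Multiplying by $b!/z^{b}$ then yields $b!\sum_{k=0}^{\infty} z^{k}/(b+k)!$, which matches the series just obtained for $\Phi_{1,b}(z)$. Since both sides are given by the same everywhere-convergent power series, they agree as entire functions (the apparent pole at $z=0$ in the $b!/z^{b}$ factor is cancelled by the vanishing of $e^{z}-\sum_{k=0}^{b-1}z^{k}/k!$ to order $b$ at the origin).

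Alternatively, one can present this as a consequence of the known closed form for the incomplete-type Kummer function, or verify it by checking that both sides satisfy the same first-order linear ODE with the same value at $z=0$; but the direct series comparison is cleanest and self-contained. I do not anticipate a genuine obstacle here — the only point requiring a word of care is the cancellation of the $z^{-b}$ singularity, which should be noted explicitly so the statement that $\Phi_{1,b}$ is entire (used implicitly when forming its zero set in the definition of $\zeta_{1,b}^{H}$) is not left in doubt. The hypothesis $b \in \mathbb{N}$ is exactly what makes the truncated exponential sum meaningful and the factor $b!$ an integer, so no further restriction is needed.
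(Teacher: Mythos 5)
Your proof is correct and follows essentially the same route as the paper's: expand the ${}_1F_1$ series, use $(1)_k = k!$ and $(b+1)_k = (b+k)!/b!$ to collapse the coefficient to $b!/(b+k)!$, and reindex the truncated exponential to match. Your added remark about the removable singularity at $z=0$ is a sensible clarification but not a substantive departure.
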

\begin{proof}
This follows directly from the expansion
\begin{equation*}
\pFq11{1}{1+b}{z}  =  
\sum_{k=0}^{\infty} \frac{(1)_{k} z^{k}}{(1+b)_{k} \, k!}  
 =  \sum_{k=0}^{\infty} \frac{b!}{(b+k)!} z^{k} \\
 =  \frac{b!}{z^{b}} \sum_{k=b}^{\infty} \frac{z^{k}}{k!}.
\end{equation*}
\end{proof}

\begin{corollary}
The zeta function $\zeta_{HA}(s)$ in \eqref{zeta-has1} is given by 
\begin{equation}
\zeta_{HA}(s) = \frac{b!}{\Gamma(s+b-1)} \int_{0}^{\infty} 
\frac{x^{s-2} \, dx}{\Phi_{1,b}(x)}.
\end{equation}
\end{corollary}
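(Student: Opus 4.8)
The plan is simply to substitute the closed form for $\Phi_{1,b}$ furnished by Theorem \ref{thm-phi1b} into the integral representation \eqref{zeta-has1}. From that theorem one has the identity of entire functions
\begin{equation*}
e^{x} - \sum_{k=0}^{b-1} \frac{x^{k}}{k!} = \frac{x^{b}}{b!}\,\Phi_{1,b}(x),
\end{equation*}
valid in particular for all $x > 0$. Note that for $x > 0$ the left-hand side is strictly positive, being a tail of the exponential series $\sum_{k\ge 0} x^{k}/k!$, so $\Phi_{1,b}(x) > 0$ on $(0,\infty)$ and the integrand appearing below has no singularity there.

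Inserting this identity into the definition of $\zeta_{HN,b}(s)$ and clearing the factor $x^{b}$ gives
\begin{equation*}
\zeta_{HN,b}(s) = \frac{1}{\Gamma(s+b-1)}\int_{0}^{\infty} \frac{b!\,x^{s+b-2}}{x^{b}\,\Phi_{1,b}(x)}\,dx = \frac{b!}{\Gamma(s+b-1)}\int_{0}^{\infty} \frac{x^{s-2}}{\Phi_{1,b}(x)}\,dx,
\end{equation*}
since $x^{s+b-2}/x^{b} = x^{s-2}$. This is the asserted formula, with $\zeta_{HA}(s) = \zeta_{HN,b}(s)$ in the notation of the Note.

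The only point to verify beyond this one-line substitution is that the two integrals converge on a common half-plane, where equality of the integrands then forces equality of the integrals (and hence the identity propagates to the full meromorphic domain of $\zeta_{HN,b}$ by analytic continuation). Near $x = 0$ one has $\Phi_{1,b}(x) \to 1$, so the integrand behaves like $x^{s-2}$ and integrability requires $\realpart{s} > 1$; as $x \to \infty$, $\Phi_{1,b}(x) \sim b!\,e^{x}/x^{b}$ makes the integrand decay exponentially, so there is no constraint at infinity. There is essentially no obstacle here: the content of the corollary is entirely the substitution supplied by Theorem \ref{thm-phi1b} after clearing the power of $x$, the only bookkeeping being the matching of the exponents $s+b-2$ and $s-2$ and the admissible range of $s$.
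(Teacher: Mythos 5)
Your proof is correct and is exactly the argument the paper intends: the corollary is stated without proof precisely because it is the immediate substitution of the closed form from Theorem \ref{thm-phi1b} into the integral \eqref{zeta-has1}, clearing the factor $x^{b}/b!$. Your added remarks on positivity of $\Phi_{1,b}$ on $(0,\infty)$ and on the convergence range $\realpart{s}>1$ are sensible bookkeeping that the paper omits.
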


The next property of $\Phi_{a,b}(z)$ is a representation as an 
infinite product. The result comes from the classical Hadamard factorization 
theorem for entire functions. A preliminary lemma is given first. 

\begin{lemma}
\label{der-f11}
The Kummer function $\Phi_{a,b}(z)$ satisfies 
\begin{equation}
\frac{d}{dz} \Phi_{a,b}(z) = 
\frac{a}{a+b} \Phi_{a+1,b}(z).
\end{equation}
\end{lemma}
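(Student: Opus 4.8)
The plan is to differentiate the power-series definition of $\Phi_{a,b}(z)$ term by term and recognize the result as a shifted series. Starting from
\begin{equation*}
\Phi_{a,b}(z) = \sum_{k=0}^{\infty} \frac{(a)_{k}}{(a+b)_{k}} \frac{z^{k}}{k!},
\end{equation*}
differentiation gives
\begin{equation*}
\frac{d}{dz}\Phi_{a,b}(z) = \sum_{k=1}^{\infty} \frac{(a)_{k}}{(a+b)_{k}} \frac{z^{k-1}}{(k-1)!} = \sum_{j=0}^{\infty} \frac{(a)_{j+1}}{(a+b)_{j+1}} \frac{z^{j}}{j!},
\end{equation*}
after the reindexing $j = k-1$. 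The term-by-term differentiation is justified because $M(a,b;z)$ is entire, so the series converges uniformly on compact sets.

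The key step is the Pochhammer identity $(x)_{j+1} = x\,(x+1)_{j}$, applied to both numerator and denominator: $(a)_{j+1} = a\,(a+1)_{j}$ and $(a+b)_{j+1} = (a+b)\,(a+b+1)_{j}$. Substituting yields
\begin{equation*}
\frac{d}{dz}\Phi_{a,b}(z) = \frac{a}{a+b}\sum_{j=0}^{\infty} \frac{(a+1)_{j}}{(a+b+1)_{j}} \frac{z^{j}}{j!} = \frac{a}{a+b}\, {}_{1}F_{1}\!\left(\genfrac{}{}{0pt}{}{a+1}{a+b+1}\bigg|z\right).
\end{equation*}
Finally one observes that $(a+1) + b = (a+b+1)$, so the second parameter is exactly $(a+1)+b$, and hence the hypergeometric function on the right is $\Phi_{a+1,b}(z)$ by the definition \eqref{def-kummer1}. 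This gives $\frac{d}{dz}\Phi_{a,b}(z) = \frac{a}{a+b}\Phi_{a+1,b}(z)$.

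There is no real obstacle here; the only point requiring a word of care is matching the parameters of the differentiated ${}_1F_1$ to the notational convention $\Phi_{a,b} = M(a,a+b;\cdot)$ — i.e., checking that raising $a$ to $a+1$ while keeping $b$ fixed corresponds on the Kummer side to raising both arguments by one, which is precisely the standard contiguous relation $\frac{d}{dz}M(a,c;z) = \frac{a}{c}M(a+1,c+1;z)$. I would present the proof as the three-line computation above.
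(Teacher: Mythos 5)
Your proof is correct and follows essentially the same route as the paper: the paper simply invokes the standard derivative formula $\frac{d}{dz}M(a,c;z)=\frac{a}{c}M(a+1,c+1;z)$ (DLMF 13.3.15) and notes that raising both Kummer parameters by one corresponds to $\Phi_{a+1,b}$, whereas you derive that formula yourself by termwise differentiation of the series and the Pochhammer identity $(x)_{j+1}=x\,(x+1)_{j}$. Your version is just a self-contained expansion of the citation; nothing is missing.
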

\begin{proof}
This comes directly from formula 
\begin{equation}
\frac{d}{dz} \pFq11{a}{b}{z} = \frac{a}{b} \pFq11{a+1}{b+1}{z},
\end{equation}
\noindent
which is entry $13.3.15$ on page $325$ of  \cite{lozier-2003a}. 
\end{proof}

The next step is to analyze the factorization of the function 
$\Phi_{a,b}(z)$. Recall that the \textit{order} of an entire 
function $h(z)$ is defined as the infimum of $\alpha \in \mathbb{R}$ for 
which there exists a radius $r_{0} > 0$ such that 
\begin{equation}
|h(z)| < e^{|z|^{\alpha}} \text{ for } |z| > r_{0}.
\end{equation}
\noindent
The order of $h(z)$ is denoted by $\rho(h)$. See \cite{boas-1954a} for 
information on the order of entire functions.  The main result used here 
is Hadamard's theorem stated below.

\begin{theorem}
For $p \in \mathbb{N}$, define the elementary factors 
\begin{equation}
E_{p}(z) = \begin{cases}
    1 - z & \quad \text{ if } p = 0, \\
    (1 - z) \text{exp}\left( z + \frac{z^{2}}{2} + \frac{z^{3}}{3} + 
\cdots + \frac{z^{p}}{p}  \right) 
 & \quad \text{ otherwise}.
          \end{cases}
\end{equation}
\noindent
Assume $h$ is an entire function of finite order $\rho = \rho(h)$. Let 
$\{ a_{n} \}$ be the collection of zeros of $h$ repeated according to 
multiplicity. Then $h$ admits the factorization 
\begin{equation}
h(z) = z^{m} e^{g(z)} \prod_{n=1}^{\infty} E_{p} \left( \frac{z}{a_{n} }
\right).
\end{equation}
\noindent
where $g(z)$ is a polynomial of degree $q \leq \rho, \, 
p = \lf \rho \rf$ and $m \geq 0$ is the order of the zero of $h$ at the origin.
\end{theorem}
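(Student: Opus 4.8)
The statement is the classical Hadamard factorization theorem, so the plan is to recall its standard proof; it proceeds in four stages: (i) bound the density of the zeros of $h$ in terms of $\rho$; (ii) build the canonical product $P(z) = \prod_{n} E_{p}(z/a_{n})$ and check it is entire of order at most $\rho$; (iii) identify the quotient $h/(z^{m} P)$ as $e^{g}$ for an entire function $g$; and (iv) prove that $g$ is a polynomial of degree $q \le \rho$. Stage (iv) carries the real content.

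For (i), apply Jensen's formula on $\lvert z \rvert = 2R$ to $h(z)/z^{m}$: writing $n(r)$ for the number of zeros of $h$ in $\lvert z \rvert \le r$ and $N(r) = \int_{0}^{r} n(t)\,\tfrac{dt}{t}$, one obtains $N(R) \le \max_{\lvert z \rvert = 2R}\log\lvert h(z) \rvert + O(1)$, which is $O(R^{\rho + \varepsilon})$ for every $\varepsilon > 0$ by the definition of order. Since $n(r)\log 2 \le \int_{r}^{2r} n(t)\,\tfrac{dt}{t} \le N(2r)$, this gives $n(r) = O(r^{\rho + \varepsilon})$, and a summation by parts yields $\sum_{n}\lvert a_{n} \rvert^{-\sigma} < \infty$ for every $\sigma > \rho$; in particular $\sum_{n}\lvert a_{n}\rvert^{-(p+1)} < \infty$, since $p = \lf \rho \rf$ forces $p+1 > \rho$. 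For (ii) I would invoke the elementary estimates $\lvert \log E_{p}(w) \rvert \le C\lvert w \rvert^{p+1}$ for $\lvert w \rvert \le \tfrac12$ and $\log\lvert E_{p}(w) \rvert \le C\lvert w \rvert^{p}$ for $\lvert w \rvert \ge \tfrac12$; splitting the product according as $\lvert a_{n} \rvert \le 2\lvert z \rvert$ or $\lvert a_{n} \rvert > 2\lvert z \rvert$ and feeding in $n(r) = O(r^{\rho+\varepsilon})$ gives local uniform convergence of $P$ and $\log\lvert P(z) \rvert = O(\lvert z \rvert^{\rho + \varepsilon})$, so $\rho(P) \le \rho$.

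Applying everything above to $h(z)/z^{m}$ we may assume $h(0) \ne 0$, the general case following by reinstating $z^{m}$. Then $F = h/P$ is entire and nowhere vanishing on the simply connected plane, hence $F = e^{g}$ with $g$ entire, and $h(z) = z^{m} e^{g(z)} P(z)$. The remaining task, stage (iv), is $\deg g \le \rho$, and this is the main obstacle. I would start from the Poisson--Jensen formula for $\log\lvert h \rvert$ on $\lvert w \rvert \le R$, which produces a representation of $h'/h$ carrying a simple pole $1/(z - a_{k})$ at each zero $a_{k}$ in the disk; differentiating it $p$ times and using the elementary identity $\tfrac{d}{dw}\log E_{p}(w) = -w^{p}/(1-w)$, which gives $(\log P)^{(p+1)}(z) = -p!\sum_{n}(a_{n}-z)^{-(p+1)}$, the pole contributions in $(h'/h)^{(p)}$ cancel exactly in
\begin{equation*}
g^{(p+1)}(z) = \left( \frac{h'}{h} \right)^{(p)}(z) + p!\sum_{n \ge 1} \frac{1}{(a_{n} - z)^{p+1}}.
\end{equation*}
What remains is the tail $\sum_{\lvert a_{n} \rvert \ge R}(a_{n}-z)^{-(p+1)}$ plus boundary terms, all of order $O(R^{\rho - p - 1 + \varepsilon})$ by the zero count and $\max_{\lvert w \rvert = R}\log\lvert h(w) \rvert = O(R^{\rho+\varepsilon})$; since $\rho - p - 1 < 0$, letting $R \to \infty$ (through radii avoiding the moduli of the $a_{n}$, so the boundary integral stays controlled) forces $g^{(p+1)} \equiv 0$, so $g$ is a polynomial of degree $\le p \le \rho$. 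A quicker route to the weaker conclusion $q \le \rho$ alone uses the Borel--Carath\'eodory theorem, bounding $\lvert g \rvert$ on $\lvert z \rvert = r$ via $\realpart g = \log\lvert h/P \rvert$ on $\lvert z \rvert = 2r$ (with a minimum-modulus estimate for $P$), followed by Cauchy's estimates on the Taylor coefficients of $g$. The delicate bookkeeping is the usual one: the borderline case $\rho \in \mathbb{N}$, where one must check the $\varepsilon$-losses are consistent with the sharp statement $q \le \rho$, $p = \lf \rho \rf$.
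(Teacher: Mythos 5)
The paper does not prove this statement at all: it is quoted as the classical Hadamard factorization theorem, serving as background for the factorization of $\Phi_{a,b}$ in Theorem \ref{thm-product}, with \cite{boas-1954a} cited for the relevant theory of entire functions. So there is no proof in the paper to compare against; what you have written is the standard textbook proof, and your outline is essentially correct. All four stages are the right ones, the key identities are stated accurately (in particular $(\log P)^{(p+1)}(z) = -p!\sum_{n}(a_{n}-z)^{-(p+1)}$ and the convergence exponent bound $\sum_{n}\lvert a_{n}\rvert^{-(p+1)} < \infty$ from $p+1 > \rho$), and you correctly flag the two standard technical points: the boundary integral in the differentiated Poisson--Jensen formula requires a bound on $\int \lvert \log\lvert h\rvert\rvert$ rather than just $\int \log^{+}\lvert h\rvert$ (obtained from Jensen's formula via $\int\lvert\log\lvert h\rvert\rvert = 2\int\log^{+}\lvert h\rvert - \int\log\lvert h\rvert$), and the Borel--Carath\'eodory route needs a minimum-modulus estimate for $P$ valid only off exceptional radii. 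As a referee's check of a sketch rather than a fully written proof, I see no gap; if you intend to write it out in full, the place demanding the most care is exactly the one you identify, namely the estimate $O(R^{\rho-p-1+\varepsilon})$ for the tail and boundary terms, where you must fix $\varepsilon$ small enough that $\rho+\varepsilon < p+1$ (always possible since $\rho < p+1$ strictly, whether or not $\rho$ is an integer).
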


The next result establishes the order of $\Phi_{a,b}(z)$. 

\begin{theorem}
Let $a, \, b  > 0$. Then  
$\Phi_{a,b}$ is an entire function 
of order $1$.
\end{theorem}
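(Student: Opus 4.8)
The plan is to read off the order of $\Phi_{a,b}$ directly from the decay of its Taylor coefficients. Write $\Phi_{a,b}(z)=\sum_{k=0}^{\infty}c_{k}z^{k}$, so that
\[
c_{k}=\frac{(a)_{k}}{(a+b)_{k}}\cdot\frac{1}{k!}=\frac{1}{k!}\prod_{j=0}^{k-1}\frac{a+j}{a+b+j}.
\]
The classical formula for the order of an entire function (see \cite{boas-1954a}) gives
\[
\rho(\Phi_{a,b})=\limsup_{k\to\infty}\frac{k\ln k}{-\ln|c_{k}|},
\]
so the whole problem reduces to estimating $c_{k}$ from above and below as $k\to\infty$.

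For the bound $\rho(\Phi_{a,b})\le 1$: since $b>0$, each factor $\tfrac{a+j}{a+b+j}$ is less than $1$, hence $0<c_{k}\le 1/k!$. This gives at once that the series has infinite radius of convergence, so $\Phi_{a,b}$ is entire, and that $|\Phi_{a,b}(z)|\le e^{|z|}$ for all $z$; consequently $\rho(\Phi_{a,b})\le 1$. Equivalently, $-\ln|c_{k}|\ge\ln k!$ and $k\ln k/\ln k!\to 1$.

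For the bound $\rho(\Phi_{a,b})\ge 1$: I would estimate the product from below. Using the elementary inequality $\ln(1-x)\ge -x/(1-x)$ for $x\in[0,1)$ with $x=\tfrac{b}{a+b+j}$, and noting that $x/(1-x)=\tfrac{b}{a+j}$, one obtains
\[
\ln\prod_{j=0}^{k-1}\frac{a+j}{a+b+j}=\sum_{j=0}^{k-1}\ln\!\left(1-\frac{b}{a+b+j}\right)\ge -b\sum_{j=0}^{k-1}\frac{1}{a+j}=-b\ln k+O(1).
\]
Therefore $-\ln|c_{k}|=\ln k!-\ln\prod_{j=0}^{k-1}\tfrac{a+j}{a+b+j}\le\ln k!+b\ln k+O(1)$, and since $\ln k!=k\ln k-k+O(\ln k)$ by Stirling, this yields $-\ln|c_{k}|\le k\ln k-k+O(\ln k)$; hence $\rho(\Phi_{a,b})\ge\limsup_{k}\tfrac{k\ln k}{k\ln k-k+O(\ln k)}=1$. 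Combining the two inequalities gives $\rho(\Phi_{a,b})=1$.

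I do not anticipate a genuine obstacle: the only point requiring care is extracting the exact value $1$ rather than merely the bound $\le 1$, which the lower estimate on the coefficients supplies. An alternative, perhaps more transparent, route for the lower bound is to invoke the classical large-argument asymptotics of the Kummer function, $M(a,a+b;x)\sim\frac{\Gamma(a+b)}{\Gamma(a)}e^{x}x^{-b}$ as $x\to+\infty$ along the real axis: a function growing this fast on a ray cannot have order smaller than $1$. As a consistency check, for $a=1$ and $b\in\allN$ Theorem \ref{thm-phi1b} already exhibits $\Phi_{1,b}$ as an explicit elementary function visibly of order $1$.
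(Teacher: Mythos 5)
Your proof is correct, but your main argument for the lower bound $\rho(\Phi_{a,b})\ge 1$ takes a genuinely different route from the paper's. Both proofs obtain $\rho\le 1$ the same way, from $0<(a)_k/(a+b)_k<1$ and the resulting domination $|\Phi_{a,b}(z)|\le e^{|z|}$. For the reverse inequality the paper invokes the large-argument asymptotics of the Kummer function, $M(a,a+b;x)\sim \frac{\Gamma(a+b)}{\Gamma(a)}\,e^{x}x^{-b}$ as $x\to+\infty$, and concludes that a function which eventually beats $\exp(x^{\varepsilon})$ on the positive real axis for every $\varepsilon<1$ has order at least $1$ --- this is exactly the ``alternative route'' you sketch at the end. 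Your primary argument instead reads the order off the Taylor coefficients via $\rho=\limsup_{k} k\ln k/(-\ln|c_k|)$, using the two-sided estimate $Ck^{-b}/k!\le c_k\le 1/k!$; your inequality $\ln(1-x)\ge -x/(1-x)$ with $x=b/(a+b+j)$ is applied correctly (note $a+j>0$ guarantees $x<1$), and $\sum_{j=0}^{k-1}1/(a+j)=\ln k+O(1)$ gives the polynomial lower bound on the Pochhammer ratio. This version is more self-contained: it needs no asymptotic expansion of the confluent hypergeometric function, only Stirling's formula and the standard coefficient criterion for the order, and with a little more work it would also deliver the type. The paper's route is shorter once the asymptotics of $M$ are taken as known, and those asymptotics are in any case reused immediately afterwards to describe the distribution of the zeros. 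Both arguments are valid.
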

\begin{proof}
The ratio test shows that the function  $\Phi_{a,b}(z)$ is entire. Moreover 
\begin{equation}
\frac{(a)_{\ell}}{(a+b)_{\ell}} = 
\prod_{k=0}^{\ell-1} \frac{a+k}{a+b+k} < 1,
\end{equation}
\noindent
therefore 
\begin{equation}
|\Phi_{a,b}(z)| \leq 
\sum_{\ell=0}^{\infty} \frac{(a)_{\ell}}{(a+b)_{\ell}} 
\left| \frac{z^{\ell}}{\ell!} \right| \leq 
\sum_{\ell=0}^{\infty} \frac{|z|^{\ell}}{\ell!}  = e^{|z|}.
\end{equation}
\noindent
This proves $\rho(h) \leq 1$. 

To establish the opposite inequality, use the asymptotic behavior 
\begin{equation}
M(a,b;z) \sim \frac{e^{-z} z^{a-b}}{\Gamma(a)}, \text{ as } z \to \infty
\end{equation} 
(see \cite{lozier-2003a}, page $323$) to see that, for every $0 \leq 
\varepsilon < 1$ and $z \in \mathbb{R}$,
\begin{equation}
\lim\limits_{z \to \infty} 
\frac{\Phi_{a,b}(z)}{\text{exp}(z^{\varepsilon})}
= + \infty.
\end{equation}
\noindent 
Hence, for any 
given $r_{0} > 0$, there is $r > r_{0}$ such that 
\begin{equation}
| \Phi_{a,b}(r) | = \Phi_{a,b}(r) > 
\text{exp} \left(r^{\varepsilon} \right) 
= \text{exp}\left(|r|^{\varepsilon} \right). 
\end{equation}
\noindent
This proves $\rho(h) \geq 1$ and the proof is complete.
\end{proof}

The factorization of $\Phi_{a,b}(z)$ in terms of its zeros $\{ z_{a,b;k} \}$
is discussed next. Section $13.9$ of 
\cite{lozier-2003a} states that if $a$ and $b \neq 0, \, -1, \, -2, \, 
\cdots$, then $\Phi_{a,b}(z)$ has infinitely many complex zeros. Moreover, if 
$a, \, b \geq 0$, then there are no real zeros. The growth of the large 
zeros of $M(a,a+b;z)$ is given by 
\begin{equation}
z_{a,b;n} = \pm( 2n+a) \pi \imath + 
\ln \left( - \frac{\Gamma(a)}{\Gamma(b)} ( \pm 2 n \pi \imath)^{b-a} \right)
+ O( n^{-1} \ln n ),
\end{equation}
\noindent
where $n$ is a large positive integer, and the logarithm takes its 
principal value. 
 
\smallskip

\begin{theorem}
\label{thm-product}
The function $\Phi_{a,b}(z)$ admits the factorization 
\begin{equation}
\Phi_{a,b}(z) = e^{az/(a+b)} 
\prod_{k=1}^{\infty} \left( 1 - \frac{z}{z_{a,b;k}} \right) 
e^{z/z_{a,b;k}}.
\end{equation}
\end{theorem}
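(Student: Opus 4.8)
The plan is to apply Hadamard's factorization theorem directly, using the two facts already established: $\Phi_{a,b}(z)$ is entire of order $\rho = 1$ (the previous theorem), and $\frac{d}{dz}\Phi_{a,b}(z) = \frac{a}{a+b}\Phi_{a+1,b}(z)$ (Lemma \ref{der-f11}). Since $\rho = 1$, Hadamard gives $p = \lfloor \rho \rfloor = 1$, so the elementary factors are $E_1(z/z_{a,b;k}) = (1 - z/z_{a,b;k})e^{z/z_{a,b;k}}$, and the exponential prefactor $e^{g(z)}$ has $g$ a polynomial of degree $q \le \rho = 1$, hence $g(z) = \alpha + \beta z$ for constants $\alpha, \beta$. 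Also $\Phi_{a,b}(0) = 1 \ne 0$, so $m = 0$ and there is no factor $z^m$. Thus
\begin{equation*}
\Phi_{a,b}(z) = e^{\alpha + \beta z} \prod_{k=1}^{\infty}\left(1 - \frac{z}{z_{a,b;k}}\right)e^{z/z_{a,b;k}},
\end{equation*}
and it remains only to pin down $\alpha$ and $\beta$.

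First I would evaluate at $z = 0$: the product equals $1$ there and $\Phi_{a,b}(0) = 1$, so $e^{\alpha} = 1$, i.e. $\alpha = 0$ (choosing the branch so that $g(0) = 0$, which is harmless). Next I would determine $\beta$ by comparing logarithmic derivatives at the origin. Taking $\frac{d}{dz}\log$ of both sides gives
\begin{equation*}
\frac{\Phi_{a,b}'(z)}{\Phi_{a,b}(z)} = \beta + \sum_{k=1}^{\infty}\left(\frac{1}{z - z_{a,b;k}} + \frac{1}{z_{a,b;k}}\right),
\end{equation*}
and setting $z = 0$ makes the sum telescope to $0$, so $\beta = \Phi_{a,b}'(0)/\Phi_{a,b}(0) = \Phi_{a,b}'(0)$. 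By Lemma \ref{der-f11}, $\Phi_{a,b}'(0) = \frac{a}{a+b}\Phi_{a+1,b}(0) = \frac{a}{a+b}$. Hence $\beta = \frac{a}{a+b}$, which yields the claimed factorization with prefactor $e^{az/(a+b)}$.

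The one point requiring care — and the main obstacle — is the justification that the logarithmic derivative of the Hadamard product can be computed termwise and that the resulting series converges, so that the evaluation at $z=0$ is legitimate. This is standard for genus-$1$ canonical products (the series $\sum 1/|z_{a,b;k}|^2$ converges because the order is $1$, which forces the exponent of convergence of the zeros to be at most $1$, and the asymptotics $z_{a,b;n} \sim \pm(2n+a)\pi i$ quoted from \cite{lozier-2003a} confirm $\sum |z_{a,b;k}|^{-2} < \infty$ while $\sum |z_{a,b;k}|^{-1} = \infty$, consistent with genus exactly $1$). Alternatively, and perhaps more cleanly, one can sidestep differentiation entirely: write $\Phi_{a,b}(z) = e^{g(z)} P(z)$ with $P$ the canonical product, note that $P$ is an even-in-structure object whose growth is controlled, and simply Taylor-expand both sides to order $1$ at $z = 0$, reading off the coefficient of $z$ to get $\beta = \frac{a}{a+b}$ directly from $\Phi_{a,b}(z) = 1 + \frac{a}{a+b}z + O(z^2)$, which again is immediate from Lemma \ref{der-f11} or from the series \eqref{def-kummer}. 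Either route finishes the proof.
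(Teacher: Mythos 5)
Your proof is correct and follows essentially the same route as the paper: apply Hadamard's theorem with $\rho=1$, get $B=0$ (your $\alpha$) from $\Phi_{a,b}(0)=1$, and get $A=a/(a+b)$ (your $\beta$) from the logarithmic derivative at the origin via Lemma \ref{der-f11}. The only cosmetic difference is that the paper expands the logarithmic-derivative identity to first order in $z$ (so as to also harvest the value of $\zeta^{H}_{a,b}(2)$ in the ensuing corollary), whereas you only need the constant term; also, each summand $\frac{1}{z-z_{a,b;k}}+\frac{1}{z_{a,b;k}}$ simply vanishes at $z=0$ rather than ``telescoping.''
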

\begin{proof}
Hadamard's theorem shows the existence of two constants $A, \, B$ such 
that 
\begin{equation}
\Phi_{a,b}(z) = e^{Az+B} \prod_{k=1}^{\infty} 
\left(1 - \frac{z}{z_{a,b;k}} \right)
e^{z/z_{a,b;k}}.
\label{prod-exp}
\end{equation}
\noindent
Evaluating at $z=0$, using $\Phi_{a,b}(0) =1$, gives $B=0$. To obtain 
the value of the parameter $A$, take the logarithmic derivative of 
\eqref{prod-exp} and use Lemma \ref{der-f11} to produce 
\begin{equation}
\frac{a}{a+b} \frac{\pFq11{a+1}{a+b+1}{z}}{\pFq11{a}{a+b}{z}} = 
A + \sum_{k=0}^{\infty} \left[ \frac{1}{z_{a,b;k}} - 
\frac{1}{z_{a,b;k} - z} \right]. 
\end{equation}
\noindent
Expanding both sides near $z=0$ gives 
\begin{equation}
\frac{a}{a+b} + \frac{ab}{(a+b)^{2}(1+a+b)}z + \mathcal{O}(z^{2}) =
A - \sum_{k=1}^{\infty} \frac{z}{z_{a,b;k}^{2}} + 
\mathcal{O}(z^{2}). 
\label{expansion-0}
\end{equation}
\noindent
This gives $A = a/(a+b)$, completing the proof.
\end{proof}

\begin{corollary}
The hypergeometric zeta function has the special value 
\begin{equation}
\zeta_{a,b}^{H}(2) = - \frac{ab}{(a+b)^{2}(1+a+b)}.
\label{value-at2}
\end{equation}
\end{corollary}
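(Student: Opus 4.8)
The plan is to read off the claimed value directly from the proof of Theorem~\ref{thm-product}, pushing that argument one order further in $z$. Recall that taking the logarithmic derivative of the Hadamard product \eqref{prod-exp}, invoking Lemma~\ref{der-f11}, and substituting the value $A = a/(a+b)$ already found there produces the identity \eqref{expansion-0}, namely
\[
\frac{a}{a+b} + \frac{ab}{(a+b)^{2}(1+a+b)}z + \mathcal{O}(z^{2}) = \frac{a}{a+b} - \sum_{k=1}^{\infty} \frac{z}{z_{a,b;k}^{2}} + \mathcal{O}(z^{2}).
\]
The constant terms cancel (that cancellation is precisely how $A$ was determined), so I would simply compare the coefficients of $z^{1}$ on the two sides; by the definition \eqref{hypergeom-def} the right-hand side linear coefficient is $-\zeta_{a,b}^{H}(2)$, and the corollary follows at once.

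The one point deserving a word of justification is the term-by-term expansion of the sum on the right. Hadamard's theorem guarantees that the canonical product in \eqref{prod-exp} converges locally uniformly, hence its logarithmic derivative equals $A + \sum_{k}\bigl[z_{a,b;k}^{-1} - (z_{a,b;k}-z)^{-1}\bigr]$ with the series converging locally uniformly on any disc about the origin missing the zeros. Using the geometric expansion $z_{a,b;k}^{-1} - (z_{a,b;k}-z)^{-1} = -\sum_{j\geq 1} z^{j} z_{a,b;k}^{-(j+1)}$ for $|z| < |z_{a,b;k}|$, the coefficient of $z^{1}$ in the $k$-th summand is $-z_{a,b;k}^{-2}$. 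The growth estimate $z_{a,b;n} = \pm(2n+a)\pi\imath + O(\ln n)$ recalled just before Theorem~\ref{thm-product} gives $|z_{a,b;n}| \sim 2\pi n$, so $\sum_{k} |z_{a,b;k}|^{-2} < \infty$; thus $\zeta_{a,b}^{H}(2)$ converges absolutely (consistently with $\realpart s = 2 > 1$ in \eqref{hypergeom-def}) and the interchange of summations needed to identify the $z^{1}$-coefficient of $\sum_{k}\bigl[z_{a,b;k}^{-1} - (z_{a,b;k}-z)^{-1}\bigr]$ with $-\zeta_{a,b}^{H}(2)$ is legitimate.

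It remains to confirm the left-hand side of the displayed identity, i.e. to expand $\tfrac{a}{a+b}\,\Phi_{a+1,b}(z)/\Phi_{a,b}(z)$ through order $z$. From the series \eqref{kummer-1-def} one has $\Phi_{c,b}(z) = 1 + \tfrac{c}{c+b}\,z + \mathcal{O}(z^{2})$, so $\Phi_{a,b}(z)^{-1} = 1 - \tfrac{a}{a+b}\,z + \mathcal{O}(z^{2})$ and multiplying by $\Phi_{a+1,b}(z) = 1 + \tfrac{a+1}{a+b+1}\,z + \mathcal{O}(z^{2})$ and scaling by $a/(a+b)$ gives constant term $a/(a+b)$ and linear coefficient $\tfrac{a}{a+b}\bigl(\tfrac{a+1}{a+b+1} - \tfrac{a}{a+b}\bigr) = \tfrac{ab}{(a+b)^{2}(1+a+b)}$, matching \eqref{expansion-0}. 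Equating $z^{1}$-coefficients then yields $\zeta_{a,b}^{H}(2) = -ab/[(a+b)^{2}(1+a+b)]$. There is essentially no hard step here — the statement is labelled a corollary precisely because the governing identity \eqref{expansion-0} was already derived inside the proof of Theorem~\ref{thm-product}; the only care needed is the mild analytic bookkeeping of the previous paragraph.
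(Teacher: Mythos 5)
Your proposal is correct and is essentially the paper's proof: the corollary is obtained by comparing the coefficients of $z$ in \eqref{expansion-0}, exactly as you do. The extra verification of the left-hand side expansion and the justification of the termwise interchange are sound but not needed beyond what the proof of Theorem~\ref{thm-product} already supplies.
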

\begin{proof}
Compare the coefficients of $z$ in \eqref{expansion-0}.
\end{proof}

\smallskip

The next statement presents additional properties of the Kummer function which
will be useful in the next section. It 
appears as entries $13.4.12$ and $13.4.13$ in \cite{abramowitz-1972a}. 

\begin{lemma}
\label{kummer-56}
The Kummer function satisfies 
\begin{equation}
\frac{d}{dz} \Phi_{a,b}(z) = - \frac{b}{a+b} \Phi_{a,b+1}(z) + 
\Phi_{a,b}(z)
\label{kummer-1}
\end{equation}
\noindent
and 
\begin{equation}
\frac{d}{dz} \Phi_{a,b+1}(z) = \frac{a+b}{z} 
\left( \Phi_{a,b}(z) -  \Phi_{a,b+1}(z) \right).
\label{kummer-2}
\end{equation}
\end{lemma}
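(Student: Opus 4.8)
The plan is to derive both formulas from the single differentiation rule of Lemma~\ref{der-f11}, namely $\frac{d}{dz}\Phi_{a,b}(z) = \frac{a}{a+b}\Phi_{a+1,b}(z)$, combined with two elementary contiguous relations for the confluent series \eqref{kummer-1-def}, each of which is verified by matching Taylor coefficients. Both identities are classical --- they are entries $13.4.12$ and $13.4.13$ of \cite{abramowitz-1972a} --- so the task is bookkeeping rather than anything substantial; one could simply cite these entries, but a self-contained derivation is short.

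For \eqref{kummer-1}, using Lemma~\ref{der-f11} reduces the claim to the purely algebraic relation
\begin{equation*}
a\,M(a+1,a+b+1;z) = (a+b)\,M(a,a+b;z) - b\,M(a,a+b+1;z),
\end{equation*}
and comparing the coefficient of $z^{k}/k!$ on the two sides further reduces it to the Pochhammer identity $a(a+1)_{k}(a+b)_{k} = (a+b)(a)_{k}(a+b+1)_{k} - b(a)_{k}(a+b)_{k}$, which is immediate from $a(a+1)_{k} = (a)_{k}(a+k)$ and $(a+b)(a+b+1)_{k} = (a+b)_{k}(a+b+k)$. Alternatively, and perhaps more transparently, one may differentiate the Kummer transformation $\Phi_{a,b}(z) = e^{z}M(b,a+b;-z)$ (entry $13.1.27$ of \cite{abramowitz-1972a}): applying the product rule together with entry $13.3.15$ of \cite{lozier-2003a} to the inner factor, and then re-applying the same transformation to identify $e^{z}M(b+1,a+b+1;-z) = \Phi_{a,b+1}(z)$, yields \eqref{kummer-1} directly.

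For \eqref{kummer-2}, the auxiliary ingredient is the contiguous relation
\begin{equation*}
\Phi_{a,b}(z) - \Phi_{a,b+1}(z) = \frac{a\,z}{(a+b)(a+b+1)}\,\Phi_{a+1,b+1}(z),
\end{equation*}
which one obtains by writing $\frac{1}{(a+b)_{k}} - \frac{1}{(a+b+1)_{k}} = \frac{k}{(a+b)_{k}(a+b+k)}$ and matching powers of $z$. Substituting this into the right-hand side of \eqref{kummer-2} produces $\frac{a}{a+b+1}\Phi_{a+1,b+1}(z)$, which equals $\frac{d}{dz}\Phi_{a,b+1}(z)$ by Lemma~\ref{der-f11} with $b$ replaced by $b+1$, and the identity follows. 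I anticipate no genuine obstacle; the only point meriting a moment's care is that the left-hand side of the displayed contiguous relation vanishes at $z=0$, so that dividing by $z$ in \eqref{kummer-2} produces an entire function and introduces no spurious singularity. Everything else is routine manipulation of Pochhammer symbols.
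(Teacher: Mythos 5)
Your proposal is correct: both coefficient computations check out (the Pochhammer identity $a(a+1)_{k}(a+b)_{k}=(a)_{k}(a+b)_{k}(a+k)$ for \eqref{kummer-1}, and the telescoping $\tfrac{1}{(a+b)_{k}}-\tfrac{1}{(a+b+1)_{k}}=\tfrac{k}{(a+b)_{k}(a+b+k)}$ for \eqref{kummer-2}), and the alternative derivation of \eqref{kummer-1} via $\Phi_{a,b}(z)=e^{z}M(b,a+b;-z)$ is also sound. The paper itself offers no argument here --- it simply records that these are entries $13.4.12$ and $13.4.13$ of \cite{abramowitz-1972a} --- so your contribution is a self-contained verification of a classical fact the paper takes on faith. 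What your route buys is independence from the handbook and an explicit check that the normalization $\Phi_{a,b}=M(a,a+b;\cdot)$ (with the shifted second parameter) is consistent with the stated coefficients $-\tfrac{b}{a+b}$ and $\tfrac{a+b}{z}$; your remark that $\Phi_{a,b}(0)-\Phi_{a,b+1}(0)=0$, so that the division by $z$ in \eqref{kummer-2} is harmless, is the one point a bare citation would leave implicit.
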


\section{Recurrence for the hypergeometric zeta function} 
\label{sec-recurrences} 

This section describes some recurrences for the values $\zeta_{a,b}^{H}(k)$. 
The proofs are based on a relation between the Kummer function 
$\Phi_{a,b}(z)$ and these values. It is the analog of standard result for the 
usual zeta function
\begin{equation}
\sum_{k=1}^{\infty} \zeta(k+1)z^{k} = - \gamma - \psi(1-z)
\end{equation}
\noindent 
where $\psi(z) = \frac{d}{dz} \log \Gamma(z)$ is the digamma function and 
$\gamma$ is the Euler constant. This 
relation is obtained directly from the product representation of $\Gamma(z)$. 
See entry $6.3.14$ in \cite{abramowitz-1972a}.

\begin{prop}
\label{kummer-90}
The Kummer function $\Phi_{a,b}(z)$ and the hypergeometric 
zeta function are related by 
\begin{equation}
\frac{\Phi_{a,b+1}(z)}{\Phi_{a,b}(z)} = 1 + \frac{a+b}{b} 
\sum_{k=1}^{\infty} \zeta_{a,b}^{H}(k+1)z^{k}.
\end{equation}
\end{prop}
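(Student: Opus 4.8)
The plan is to compute the logarithmic derivative of the Kummer function using the two contiguous relations recorded in Lemma~\ref{kummer-56}, convert that into a power series identity involving the zeros, and then integrate. First I would start from \eqref{kummer-1}, written as
\begin{equation*}
\frac{\Phi_{a,b}'(z)}{\Phi_{a,b}(z)} = 1 - \frac{b}{a+b} \frac{\Phi_{a,b+1}(z)}{\Phi_{a,b}(z)},
\end{equation*}
so that the quantity $Q(z) := \dfrac{\Phi_{a,b+1}(z)}{\Phi_{a,b}(z)}$ is expressed through the logarithmic derivative of $\Phi_{a,b}$. On the other hand, Theorem~\ref{thm-product} gives the Hadamard product for $\Phi_{a,b}(z)$, and taking its logarithmic derivative yields
\begin{equation*}
\frac{\Phi_{a,b}'(z)}{\Phi_{a,b}(z)} = \frac{a}{a+b} + \sum_{k=1}^{\infty}\left(\frac{1}{z - z_{a,b;k}} + \frac{1}{z_{a,b;k}}\right),
\end{equation*}
valid near $z=0$ where everything converges. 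Expanding the bracketed term as a geometric series in $z$ and summing gives $-\sum_{j\ge 1} z^{j-1} \sum_k z_{a,b;k}^{-j} = -\sum_{j\ge1} \zeta_{a,b}^H(j)\, z^{j-1}$; note the $j=1$ contribution cancels against the $1/z_{a,b;k}$ terms, which is exactly why $\zeta_{a,b}^H(s)$ is only defined for $\realpart s>1$ and why the series over the zeros starts effectively at $j=2$.

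Next I would combine these two expressions. Substituting the Hadamard computation into the formula for $Q(z)$ obtained from \eqref{kummer-1}, the constant terms give $Q(0) = \frac{a+b}{b}(1 - \frac{a}{a+b}) = 1$, matching the claimed identity, and the coefficient of $z^k$ for $k\ge1$ on the right-hand side picks out $-\frac{a+b}{b}\cdot(-\zeta_{a,b}^H(k+1)) = \frac{a+b}{b}\zeta_{a,b}^H(k+1)$, which is precisely the coefficient asserted in the proposition. So the whole argument reduces to an honest bookkeeping of power-series coefficients.

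The main technical point to be careful about — and what I would flag as the only real obstacle — is the interchange of summations and the radius of convergence. One must justify that the sum $\sum_k \big(\frac{1}{z-z_{a,b;k}}+\frac{1}{z_{a,b;k}}\big)$ converges locally uniformly near $z=0$ (so that term-by-term expansion into a double series is legitimate) and that the resulting double series may be rearranged into $\sum_j \zeta_{a,b}^H(j) z^{j-1}$. This follows from the growth estimate $z_{a,b;n} = \pm(2n+a)\pi i + O(\log n)$ quoted earlier in the excerpt, which shows $\sum_k |z_{a,b;k}|^{-2} < \infty$ and hence $\zeta_{a,b}^H(j)$ is finite for $j\ge 2$ and the geometric expansions are valid for $|z| < \min_k |z_{a,b;k}|$. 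Alternatively, and perhaps more cleanly for the write-up, one can avoid the Hadamard product entirely: define $F(z) := 1 + \frac{a+b}{b}\sum_{k\ge1}\zeta_{a,b}^H(k+1)z^k$, observe it is the analytic germ at $0$ of $\frac{a+b}{b}(1 - \frac{\Phi_{a,b}'}{\Phi_{a,b}})$ by the coefficient computation above, and then note \eqref{kummer-1} says exactly that this equals $\Phi_{a,b+1}/\Phi_{a,b}$; both routes are short once the convergence of the zeta series is in hand, which it is by the zero-asymptotics already stated.
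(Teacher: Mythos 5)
Your proposal is correct and follows essentially the same route as the paper: rewrite \eqref{kummer-1} to express $\Phi_{a,b+1}/\Phi_{a,b}$ through the logarithmic derivative of $\Phi_{a,b}$, substitute the logarithmic derivative of the Hadamard product from Theorem~\ref{thm-product}, and expand each term $\bigl(z_{a,b;k}-z\bigr)^{-1}$ as a geometric series valid for $|z|<\min_k |z_{a,b;k}|$. Your additional remarks on the convergence of the double series are a welcome (if routine) supplement to the paper's brief justification.
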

\begin{proof}
The relation \eqref{kummer-1} gives 
\begin{equation}
\frac{\Phi_{a,b+1}(z)}{\Phi_{a,b}(z)} = -\frac{a+b}{b} 
\left[ \frac{\Phi_{a,b}'(z)}{\Phi_{a,b}(z)} - 1 \right].
\end{equation}
\noindent
The fraction on the right-hand side is the logarithmic derivative of 
the product in Theorem \ref{thm-product}. This yields
\begin{equation}
\label{kummer-3}
\frac{\Phi_{a,b+1}(z)}{\Phi_{a,b}(z)} = -\frac{a+b}{b} 
\left(\frac{a}{a+b} + \sum_{k=0}^{\infty} 
\left[ \frac{1}{z_{a,b;k}} - \frac{1}{z_{a,b;k}-z} \right] - 1 
\right).
\end{equation}
\noindent 
To establish the result, use the expansion
\begin{equation}
\frac{1}{z_{a,b;k}-z} = \frac{1}{z_{a,b;k}} \frac{1}{1 - z/z_{a,b;k}} 
\end{equation}
\noindent
and expand the last term as the sum of a geometric series. Since 
$\min_{k} \left\{ |z_{a,b;k} | \right\} \neq 0$, this series 
has a positive radius of convergence.
\end{proof}

The next result gives a linear recurrence for the hypergeometric 
zeta function. This involves the beta function 
\begin{equation}
B(u,v) = \int_{0}^{1} x^{u-1}(1-x)^{v-1} \, dx = \frac{\Gamma(u) \Gamma(v)}
{\Gamma(u+v)},
\label{beta-def}
\end{equation}
\noindent
with values
\begin{equation}
B(u,v) = \frac{u+v}{uv} \binom{u+v}{u}^{-1} 
\text{ for }u, \, v \in \mathbb{N}.
\label{form-binom}
\end{equation}

\smallskip

\begin{theorem}
\label{thm-linear}
The hypergeometric zeta function satisfies the linear recurrence
\begin{equation}
\sum_{\ell=1}^{p} B(a+p-\ell,b) \frac{p!}{(p- \ell)!} 
\zeta_{a,b}^{H}(\ell+1) = 
- \frac{bp}{(a+b)(a+b+p)}B(a+p,b).
\label{linear-rec}
\end{equation}
\end{theorem}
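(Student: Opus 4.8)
The plan is to read off the recurrence from Proposition~\ref{kummer-90} by comparing Taylor coefficients, after rewriting the Kummer functions in terms of the beta function. The key observation is that the Taylor coefficients of $\Phi_{a,b}$ are themselves beta values: since
\begin{equation*}
\frac{(a)_{k}}{(a+b)_{k}} = \frac{\Gamma(a+k)\Gamma(a+b)}{\Gamma(a)\Gamma(a+b+k)} = \frac{B(a+k,b)}{B(a,b)},
\end{equation*}
we have $\Phi_{a,b}(z) = B(a,b)^{-1}\sum_{k\ge 0}B(a+k,b)z^{k}/k!$, and likewise for $\Phi_{a,b+1}$.

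First I would rewrite Proposition~\ref{kummer-90} as $\Phi_{a,b+1}(z) = \Phi_{a,b}(z)\bigl(1 + \tfrac{a+b}{b}\sum_{k\ge 1}\zeta_{a,b}^{H}(k+1)z^{k}\bigr)$ and multiply through by $B(a,b)$. Using the contiguous relations $B(a,b) = \tfrac{a+b}{b}B(a,b+1)$ and $B(a+n,b+1) = \tfrac{b}{a+b+n}B(a+n,b)$, the left-hand side becomes $(a+b)\sum_{n\ge 0}\frac{B(a+n,b)}{a+b+n}\frac{z^{n}}{n!}$, while the right-hand side is $\sum_{n\ge 0}B(a+n,b)\frac{z^{n}}{n!}$ plus $\tfrac{a+b}{b}$ times the Cauchy product of $\sum_{n\ge 0}B(a+n,b)\frac{z^{n}}{n!}$ with $\sum_{k\ge 1}\zeta_{a,b}^{H}(k+1)z^{k}$. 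All series converge near $z=0$ by Proposition~\ref{kummer-90} together with the fact that $\Phi_{a,b}$ is entire, so this manipulation is legitimate.

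Next I would compare the coefficient of $z^{p}$ on both sides, obtaining
\begin{equation*}
\frac{(a+b)B(a+p,b)}{(a+b+p)\,p!} = \frac{B(a+p,b)}{p!} + \frac{a+b}{b}\sum_{j=0}^{p-1}\frac{B(a+j,b)}{j!}\,\zeta_{a,b}^{H}(p-j+1),
\end{equation*}
where the term $j=p$ is absent because the zeta series has no constant term. Multiplying by $p!$, transposing the first term on the right (the resulting factor $\tfrac{a+b}{a+b+p}-1$ collapses to $-\tfrac{p}{a+b+p}$), multiplying by $\tfrac{b}{a+b}$, and substituting $\ell = p-j$ yields exactly \eqref{linear-rec}.

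The argument has no genuine obstacle; the only care required is in bookkeeping the two distinct normalizing constants $B(a,b)$ and $B(a,b+1)$ and in handling the index substitution $\ell = p-j$. If one prefers to avoid invoking the contiguous beta identities, the same conclusion follows by writing $\tfrac{(a)_{k}}{(a+b)_{k}}$ as a product and simplifying at the end, but the beta-function form is precisely what makes the right-hand side of \eqref{linear-rec} appear without further rearrangement.
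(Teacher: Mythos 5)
Your proposal is correct and follows essentially the same route as the paper: both start from Proposition~\ref{kummer-90}, expand $\Phi_{a,b+1} = \Phi_{a,b}\bigl(1+\tfrac{a+b}{b}\sum_k\zeta_{a,b}^{H}(k+1)z^k\bigr)$ as a Cauchy product, and equate coefficients of $z^p$; the paper merely keeps the coefficients as Pochhammer ratios $(a)_k/(a+b)_k$ and simplifies to beta functions at the end, whereas you convert to $B(a+k,b)/B(a,b)$ at the outset. The contiguous identities and the index shift $\ell=p-j$ all check out.
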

\begin{proof}
Proposition \ref{kummer-90} gives 
\begin{equation}
\Phi_{a,b+1}(z) = \Phi_{a,b}(z) + \frac{a+b}{b} \Phi_{a,b}(z) 
\sum_{k=1}^{\infty} \zeta_{a,b}^{H}(k+1) z^{k}
\end{equation}
\noindent
Matching the coefficient of $z^{p}$ gives the identity
\begin{equation}
\frac{(a)_{p}}{(a+b+1)_{p}} = \frac{(a)_{p}}{(a+b)_{p}} + 
\frac{a+b}{b} \sum_{\ell=1}^{p} \frac{(a)_{p- \ell}}{(a+b)_{p- \ell}} 
\frac{p!}{(p - \ell)!} \, \zeta_{a,b}^{H}(\ell+1).
\end{equation}
\noindent
This simplifies to produce the result.
\end{proof}

\begin{note}
The recurrence \eqref{linear-rec} is written as 
\begin{equation*}
\zeta_{a,b}^{H}(p) = - \frac{bB(a+p-1,b)}{(a+b)(a+b+p-1)(p-2)!B(a,b)} 
-\sum_{r=1}^{p-2} 
\frac{B(a+r,b)}{B(a,b)r!} \zeta_{a,b}^{H}(p-r).
\end{equation*}
\noindent
The initial condition given in \eqref{value-at2}
shows that, for $p \in \mathbb{N}$, the value 
$\zeta_{a,b}^{H}(p)$ is a rational function of $a, \, b$. The first few values
are 
\begin{eqnarray*}
\zeta_{a,b}^{H}(2) & = &  - \frac{ab}{(a+b)^{2}(1+a+b)} \\
\zeta_{a,b}^{H}(3) & = &  \frac{ab(a-b)}{(a+b)^{3} (a+b+1)(a+b+2)} \\
\zeta_{a,b}^{H}(4) & = &  - \frac{ab P_{4}(a,b)}{(a+b)^{4}(a+b+1)^{2} 
(a+b+2)(a+b+3)}
\end{eqnarray*}
\noindent
where 
\begin{equation*}
P_{4}(a,b) = a^{2} + a^{3} -4ab -2a^{2}b + b^{2} 
- 2ab^{2} + b^{3}.
\end{equation*}
\noindent
The authors were unable to discern the patterns in $\zeta_{a,b}^{H}(p)$.
\end{note}

The next result presents a different type of recurrence for 
$\zeta_{a,b}^{H}(s)$.  It is the analogue of the classical identity 
\begin{equation}
\left( n + \tfrac{1}{2} \right) \zeta(2n) = 
\sum_{k=1}^{n-1} \zeta(2k) \zeta(2n-2k), \text{ for } n \geq 2.
\end{equation}
\noindent 
See $25.6.16$ in \cite{lozier-2003a}. 

\begin{theorem}
\label{thm-quadratic}
The hypergeometric zeta function satisfies the quadratic recurrence
\begin{equation}
\sum_{k=1}^{p} \zeta_{a,b}^{H}(k+1)\zeta_{a,b}^{H}(p-k+1) = 
(a+b+p+1) \zeta_{a,b}^{H}(p+2) + \left( \frac{a-b}{a+b} \right)
 \zeta_{a,b}^{H}(p+1).
\end{equation}
\end{theorem}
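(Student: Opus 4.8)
The plan is to convert the generating-function identity of Proposition~\ref{kummer-90} into a first-order nonlinear differential equation and then read off the recurrence by matching Taylor coefficients; this parallels the classical proof of the quoted identity for $\zeta(2n)$. Write
\[
F(z) = \sum_{k=1}^{\infty} \zeta_{a,b}^{H}(k+1)\, z^{k},
\]
so that Proposition~\ref{kummer-90} reads $\Phi_{a,b+1}(z)/\Phi_{a,b}(z) = 1 + \tfrac{a+b}{b}F(z)$. Taking the logarithmic derivative of the Hadamard product of Theorem~\ref{thm-product} and expanding each summand $1/z_{a,b;k} - 1/(z_{a,b;k}-z)$ as a geometric series---exactly as in the proof of Proposition~\ref{kummer-90}---gives the more convenient form
\[
\frac{\Phi_{a,b}'(z)}{\Phi_{a,b}(z)} = \frac{a}{a+b} - F(z).
\]

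Next I would produce a Riccati equation for $F$. The Kummer equation, specialized to $\Phi_{a,b}(z) = M(a,a+b;z)$, is $z\Phi_{a,b}'' + (a+b-z)\Phi_{a,b}' - a\Phi_{a,b} = 0$; writing $L = \Phi_{a,b}'/\Phi_{a,b}$, so that $\Phi_{a,b}''/\Phi_{a,b} = L' + L^{2}$, and dividing by $\Phi_{a,b}$ gives
\[
zL' + zL^{2} + (a+b-z)L - a = 0 .
\]
Substituting $L = \tfrac{a}{a+b} - F$ and collecting terms (the constant cancels, and the pure powers of $z$ collapse to $-\tfrac{ab}{(a+b)^{2}}z$) yields
\[
zF^{2} = zF' + (a+b)F + \frac{a-b}{a+b}\, zF + \frac{ab}{(a+b)^{2}}\, z .
\]
Alternatively one can avoid the Kummer ODE: put $u = \Phi_{a,b+1}/\Phi_{a,b} = 1 + \tfrac{a+b}{b}F$, differentiate, and use the two contiguous relations of Lemma~\ref{kummer-56} to obtain $zu' = (a+b)(1-u) - zu + \tfrac{bz}{a+b}u^{2}$, which reduces to the same equation after substituting for $u$.

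Finally I would compare the coefficients of $z^{p+1}$ on both sides. Since $F$ has no constant term, $[z^{p+1}](zF^{2}) = [z^{p}]F^{2} = \sum_{k=1}^{p-1}\zeta_{a,b}^{H}(k+1)\,\zeta_{a,b}^{H}(p-k+1)$, a Cauchy product. On the right, for $p\ge 2$ the term $\tfrac{ab}{(a+b)^{2}}z$ contributes nothing, while $[z^{p+1}](zF') = (p+1)\zeta_{a,b}^{H}(p+2)$, $[z^{p+1}]\big((a+b)F\big) = (a+b)\zeta_{a,b}^{H}(p+2)$, and $[z^{p+1}]\big(\tfrac{a-b}{a+b}zF\big) = \tfrac{a-b}{a+b}\zeta_{a,b}^{H}(p+1)$; adding these produces the asserted recurrence. (Reading off the coefficients of $z$ and $z^{2}$ instead recovers the known values of $\zeta_{a,b}^{H}(2)$ and $\zeta_{a,b}^{H}(3)$, a good consistency check.) The only real obstacle is the algebraic bookkeeping in the substitution $L = \tfrac{a}{a+b} - F$---keeping every sign and every factor $a+b$ in place---together with the index shifts in the Cauchy product; once the Riccati equation for $F$ is in hand, extracting the recurrence is purely mechanical.
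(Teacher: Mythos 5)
Your proof is correct and follows essentially the same route as the paper: both reduce the statement to a Riccati equation for $\Phi_{a,b+1}/\Phi_{a,b}$ (equivalently for $F(z)=\sum_{k\ge 1}\zeta_{a,b}^{H}(k+1)z^{k}$, since the two differ by an affine change) and then match Taylor coefficients; the paper obtains that equation from the contiguous relations of Lemma~\ref{kummer-56}, which is exactly your ``alternative'' derivation, while your primary derivation via the Kummer ODE $z\Phi''+(a+b-z)\Phi'-a\Phi=0$ and $L=\Phi'/\Phi$ yields the identical equation $zF^{2}=zF'+(a+b)F+\tfrac{a-b}{a+b}zF+\tfrac{ab}{(a+b)^{2}}z$, as I have checked. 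One small but genuine point in your favour: your Cauchy product correctly gives $\sum_{k=1}^{p-1}\zeta_{a,b}^{H}(k+1)\zeta_{a,b}^{H}(p-k+1)$ for $[z^{p}]F^{2}$, whereas the theorem (and the paper's proof) writes the upper limit as $p$, which formally drags in the undefined factor $\zeta_{a,b}^{H}(1)$; the version you derive is the one that is actually true.
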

\begin{proof}
The function $f(z) = \Phi_{a,b+1}(z)/\Phi_{a,b}(z)$ satisfies the differential 
equation
\begin{equation}
f'(z) = \frac{a+b}{z} (1 - f(z)) - f(z) + \frac{b}{a+b}f^{2}(z).
\end{equation}
\noindent
This can be verified directly using the results of Lemma \ref{kummer-56}. Now 
use Theorem \ref{kummer-90} to match the coefficients of $z^{p}$ and 
produce 
\begin{eqnarray}
\frac{a+b}{b} (p+1) \zeta_{a,b}^{H}(p+2) & = & 
- \frac{(a+b)^{2}}{b} \zeta_{a,b}^{H}(p+2) - \frac{a+b}{b} \zeta_{a,b}^{H}(p+1) 
\nonumber \\
&  & + \frac{a+b}{b} \sum_{k=1}^{p} \zeta_{a,b}^{H}(k+1) \zeta_{a,b}^{H}(p-k+1)
 + 2 \zeta_{a,b}^{H}(p+1),
\nonumber
\end{eqnarray}
which, after simplification, yields the result.
\end{proof}

\begin{note}
Matching the constant terms recovers the 
value of $\zeta_{a,b}^{H}(2)$ in \eqref{value-at2}.
\end{note}

\section{The hypergeometric Bernoulli numbers} 
\label{sec-hyperB} 

This section considers properties of the \textit{hypergeometric Bernoulli
numbers} $B_{n}^{(b)}$, defined by the relation
\begin{equation}
\frac{1}{\Phi_{1,b}(z)} = \sum_{n=0}^{\infty} 
B_{n}^{(b)} \frac{z^{n}}{n!}.
\label{howard-1}
\end{equation}
\noindent
These are precisely the numbers $A_{b,n}$ studied by Howard 
\cite{howard-1967a}. This follows from Theorem \ref{thm-phi1b} and 
\eqref{howard-gen}. The special case 
$b=1$ corresponds to the Bernoulli numbers.

\medskip

The next result appears in \cite{howard-1967a} in the case $b=2$.

\begin{theorem}
\label{bern-zeta1}
Let $b \in \mathbb{N}$. The hypergeometric Bernoulli numbers $B_{n}^{(b)}$ 
are expressed in terms of the hypergeometric zeta function as 
\begin{equation}
\label{bern-terms-zeta}
B_{n}^{(b)} = \begin{cases}
          1 & \quad \text{ for } n = 0 \\
         -1/(1+b) & \quad \text{ for } n = 1 \\
          - n! \zeta_{1,b}^{H}(n)/b  & \quad \text{ for } n \geq 2.
            \end{cases}
\end{equation}
\end{theorem}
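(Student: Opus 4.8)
The plan is to derive the formula from Proposition~\ref{kummer-90} specialized to $a=1$, together with the definition~\eqref{howard-1} of the hypergeometric Bernoulli numbers $B_n^{(b)}$. The key observation is that setting $a=1$ in Proposition~\ref{kummer-90} gives
\begin{equation*}
\frac{\Phi_{1,b+1}(z)}{\Phi_{1,b}(z)} = 1 + \frac{1+b}{b} \sum_{k=1}^{\infty} \zeta_{1,b}^{H}(k+1) z^{k},
\end{equation*}
so the left-hand side is a generating function whose coefficients are (up to the factor $(1+b)/b$) the values $\zeta_{1,b}^{H}(k+1)$. On the other hand, $1/\Phi_{1,b}(z)$ is the generating function for $B_n^{(b)}/n!$ by~\eqref{howard-1}. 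The idea is to connect the ratio $\Phi_{1,b+1}/\Phi_{1,b}$ to $1/\Phi_{1,b}$ by understanding $\Phi_{1,b+1}(z)$ explicitly.

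First I would compute $\Phi_{1,b+1}(z)$ directly. By Theorem~\ref{thm-phi1b}, for $b \in \mathbb{N}$ we have $\Phi_{1,b}(z) = \frac{b!}{z^b}\bigl(e^z - \sum_{k=0}^{b-1} z^k/k!\bigr)$ and similarly for $b+1$. A short manipulation of these two closed forms should express $\Phi_{1,b+1}(z)$ as a linear combination of $\Phi_{1,b}(z)$ and a simple explicit term; concretely, writing $e^z - \sum_{k=0}^{b} z^k/k! = \bigl(e^z - \sum_{k=0}^{b-1} z^k/k!\bigr) - z^b/b!$ and clearing the powers of $z$ yields a relation of the form
\begin{equation*}
\Phi_{1,b+1}(z) = \frac{b+1}{z}\Bigl( \Phi_{1,b}(z) - 1 \Bigr).
\end{equation*}
(One can alternatively obtain this from~\eqref{kummer-2} with $a=1$.) Dividing by $\Phi_{1,b}(z)$ gives
\begin{equation*}
\frac{\Phi_{1,b+1}(z)}{\Phi_{1,b}(z)} = \frac{b+1}{z}\Bigl(1 - \frac{1}{\Phi_{1,b}(z)}\Bigr).
\end{equation*}

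Then I would substitute the series expansions: the left side is $1 + \frac{b+1}{b}\sum_{k\ge 1}\zeta_{1,b}^H(k+1)z^k$ from Proposition~\ref{kummer-90}, and $1 - 1/\Phi_{1,b}(z) = -\sum_{n\ge 1} B_n^{(b)} z^n/n!$ from~\eqref{howard-1} (using $B_0^{(b)}=1$). Multiplying the latter by $(b+1)/z$ and matching coefficients of $z^{n-1}$ on both sides should yield, for $n\ge 2$, the relation $\frac{b+1}{b}\zeta_{1,b}^H(n) = -\frac{b+1}{n!}B_n^{(b)}$, i.e. $B_n^{(b)} = -n!\,\zeta_{1,b}^H(n)/b$. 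The $n=0$ and $n=1$ cases are handled separately: $B_0^{(b)}=1$ is immediate from $\Phi_{1,b}(0)=1$, and matching the constant term (coefficient of $z^0$) on both sides of the displayed identity should give $B_1^{(b)} = -1/(1+b)$, which also matches $-1!\cdot\zeta_{1,b}^H(1)/b$ formally if one interprets things via the pole structure, but it is cleaner to just extract $B_1^{(b)}$ from the $z^0$ coefficient directly.

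The main obstacle is bookkeeping rather than conceptual: one must be careful with the shift in indices introduced by the factor $1/z$ and with the fact that $\zeta_{1,b}^H$ appears with argument $k+1$ while $B_n^{(b)}$ appears with argument $n$, so the correspondence is $n = k+1$. I should also verify the intermediate identity $\Phi_{1,b+1}(z) = \frac{b+1}{z}(\Phi_{1,b}(z)-1)$ carefully — either by the direct computation from Theorem~\ref{thm-phi1b} or by checking it is consistent with~\eqref{kummer-2} at $a=1$ — since the whole argument rests on it. Finally, I would remark that the case $b=1$ recovers the classical formula $B_n = -n!\,\zeta(n)$ for $n\ge 2$ (in the appropriate normalization), and that the case $b=2$ is Howard's result cited before the theorem.
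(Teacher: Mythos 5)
Your proposal is correct and is essentially the paper's argument in a lightly repackaged form: the paper logarithmically differentiates the Hadamard product and the closed form of Theorem \ref{thm-phi1b} and equates the two expressions for $\Phi'_{1,b}/\Phi_{1,b}$, while you route the same information through Proposition \ref{kummer-90} together with the contiguous relation $\Phi_{1,b+1}(z)=\frac{b+1}{z}\bigl(\Phi_{1,b}(z)-1\bigr)$, which via \eqref{kummer-1} is equivalent to the paper's identity $\Phi'_{1,b}/\Phi_{1,b}=1+\frac{b}{z}\Phi_{1,b}^{-1}-\frac{b}{z}$. The intermediate identity you flag does check out directly from the series (and in fact holds for all $b>0$), and the coefficient matching, including the $n=0,1$ cases, goes through exactly as you describe.
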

\begin{proof}
The product representation of $\Phi_{1,b}(z)$ given in Theorem 
\ref{thm-product} is 
\begin{equation}
\Phi_{1,b}(z) = e^{z/(1+b)} \prod_{k=1}^{\infty} \left( 1 - \frac{z}{z_{1,b;k}}
\right)e^{z/z_{1,b;k}}. 
\end{equation}
\noindent
Logarithmic differentiation yields
\begin{eqnarray*}
\frac{\Phi'_{1,b}(z)}{\Phi_{1,b}(z)} & = & 
\frac{1}{1+b} + \sum_{k=1}^{\infty} \left[ \frac{1}{z_{1,b;k}} - 
\frac{1}{z_{1,b;k}-z} \right] \\
& = & \frac{1}{1+b} + \sum_{k=1}^{\infty} \frac{1}{z_{1,b;k}} 
\left( 1 - \sum_{\ell=0}^{\infty} \left( \frac{z}{z_{1,b;k}} \right)^{\ell} 
\right) \\
& = & \frac{1}{1+b} - \sum_{\ell=1}^{\infty} z^{\ell} \zeta_{1,b}^{H}(\ell+1).
\end{eqnarray*}
\noindent
On the other hand, Theorem \ref{thm-phi1b} gives 
\begin{equation}
\Phi_{1,b}(z) = \frac{b!}{z^{b}} \left( e^{z} - \sum_{j=0}^{b-1} 
\frac{z^{j}}{j!} \right)
\end{equation}
\noindent
and logarithmic differentiation produces 
\begin{equation}
\frac{\Phi'_{1,b}(z)}{\Phi_{1,b}(z)} = 1 + \frac{b}{z} \cdot 
\frac{1}{\Phi_{1,b}(z)} - \frac{b}{z}.
\end{equation}
\noindent
Therefore,
\begin{equation}
\frac{1}{\Phi_{1,b}(z)} = 1 - \frac{z}{1+b} - \frac{1}{b} 
\sum_{\ell=2}^{\infty} z^{\ell} \zeta_{1,b}^{H}(\ell) = 
\sum_{n=0}^{\infty} B_{n}^{(b)} \frac{z^{n}}{n!}.
\end{equation}
\noindent
The conclusion follows by comparing coefficients of powers of $z$. 
\end{proof}

\begin{note}
The previous theorem suggests the definition 
\begin{equation}
\zeta_{1,b}^{H}(1) = \frac{b}{1+b}. 
\end{equation}
\end{note}

\section{Some arithmetical conjectures} 
\label{sec-arithmetical} 

Theorem \ref{thm-linear} and the relation \eqref{bern-terms-zeta} show that 
the coefficients $B_{n}^{(b)}$ are rational numbers. For example:
\begin{eqnarray*}
B_{n}^{(1)} & = &  
\{ 1, \, - \tfrac{1}{2}, \, \tfrac{1}{6}, \, 0, \, -\tfrac{1}{30}
, \, 0, \, \tfrac{1}{42}, \, 0, \,  \cdots \}, \\
B_{n}^{(2)} & = &  
\{ 1, \, - \tfrac{1}{3}, \, \tfrac{1}{18}, \, \tfrac{1}{90}, \, -\tfrac{1}{270}
, \, - \tfrac{5}{1134}, \, -\tfrac{1}{5670}, \, \tfrac{7}{2430}, \cdots \}, \\
B_{n}^{(3)} & = &  
\{ 1, \, - \tfrac{1}{4}, \, \tfrac{1}{40}, \, \tfrac{1}{160}, \, \tfrac{1}{5600}
, \, - \tfrac{1}{896}, \, -\tfrac{13}{19200}, \, \tfrac{7}{76800}, \cdots \}.
\end{eqnarray*}

The arithmetic properties of the Bernoulli numbers $B_{n} = B_{n}^{(1)}$ 
are very intriguing. It is well-known that the
Bernoulli numbers of odd index vanish (except $B_{1} = -1/2$), leaving
$B_{2n}$ for consideration. Write $B_{2n} = N_{2n}/D_{2n}$ in reduced form.
The arithmetic properties of $D_{2n}$ include the 
von Staudt-Clausen theorem stated below. The numerators $N_{2n}$ are much 
more difficult to analyze. Chapter 15 of \cite{ireland-1990a} contains 
information about their relation to Wiles theorem (previously known as 
Fermat's last theorem). 

\begin{theorem}
The denominator of the Bernoulli number $B_{2n}$ is given by 
\begin{equation}
D_{2n} = \prod_{(p-1)|2n} p.
\end{equation}
\end{theorem}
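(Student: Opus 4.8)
The plan is to establish the classical von Staudt--Clausen theorem in the sharper, equivalent form
\[
W_{2n} := B_{2n} + \sum_{(p-1)\mid 2n} \frac{1}{p} \in \mathbb{Z},
\]
following the elementary power-sum approach (see, e.g., Chapter $15$ of \cite{ireland-1990a}). Granting this, fix a prime $p$ and let $v_{p}$ be the $p$-adic valuation: if $(p-1)\mid 2n$ then $B_{2n} = W_{2n} - \tfrac{1}{p} - \sum_{q\ne p,\,(q-1)\mid 2n}\tfrac{1}{q}$ has $v_{p}(B_{2n}) = -1$, so $p$ divides $D_{2n}$ to the first power; if $(p-1)\nmid 2n$ then $B_{2n}$ is $p$-integral and $p\nmid D_{2n}$. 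Hence $D_{2n} = \prod_{(p-1)\mid 2n} p$, which is the assertion.

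Two ingredients drive the argument. The first is the elementary congruence for power sums: for a fixed prime $p$ and $m\ge 1$, writing $S_{m}(p) = \sum_{j=1}^{p-1} j^{m}$, one has $S_{m}(p)\equiv -1\pmod p$ if $(p-1)\mid m$ and $S_{m}(p)\equiv 0\pmod p$ otherwise. This follows by choosing a primitive root $g$ modulo $p$, so that $S_{m}(p)\equiv\sum_{i=0}^{p-2}(g^{m})^{i}\pmod p$, and evaluating the geometric sum: the ratio $g^{m}$ is congruent to $1$ precisely when $(p-1)\mid m$, in which case the sum is $p-1\equiv -1$; otherwise it collapses to $\bigl((g^{p-1})^{m}-1\bigr)/(g^{m}-1)\equiv 0$. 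The second ingredient is Faulhaber's formula, which after the substitution $k = m+1-i$ takes the shape
\[
S_{m}(p) \;=\; \sum_{k=1}^{m+1}\frac{1}{k}\binom{m}{k-1}B_{m+1-k}\,p^{k}
\;=\; p\,B_{m} + \sum_{k=2}^{m+1}\frac{1}{k}\binom{m}{k-1}B_{m+1-k}\,p^{k},
\]
valid with the convention $B_{1} = -\tfrac{1}{2}$.

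The core of the proof is a strong induction on $n$ proving simultaneously, for every prime $p$, that $D_{2k}$ is squarefree (hence $v_{p}(B_{2k})\ge -1$) for all $k < n$ and then the exact value of $v_{p}(B_{2n})$. Apply the displayed identity with $m = 2n$. In the tail $k\ge 2$ the summand $\tfrac{1}{k}\binom{2n}{k-1}B_{2n+1-k}p^{k}$ vanishes unless $2n+1-k$ lies in $\{0,1\}$ or is even, and for each such index $v_{p}(B_{2n+1-k})\ge -1$ by the inductive hypothesis together with $B_{0}=1$, $B_{1}=-\tfrac{1}{2}$, and the vanishing of the remaining odd-index Bernoulli numbers (which follows from the evenness of $\tfrac{t}{e^{t}-1}+\tfrac{t}{2}$). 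Since $\binom{2n}{k-1}\in\mathbb{Z}$ and $v_{p}(k)\le k-2$ for every $k\ge 2$ with the single exception $(p,k)=(2,2)$ — for which the term equals $4n\,B_{2n-1}$, identically $0$ when $n\ge 2$ — every tail term has $v_{p}\ge 1$. Therefore $p\,B_{2n}\in\mathbb{Z}_{p}$ (so $D_{2n}$ is squarefree) and $p\,B_{2n}\equiv S_{2n}(p)\pmod{p\mathbb{Z}_{p}}$; comparing with the power-sum congruence gives $v_{p}(pB_{2n})=0$ when $(p-1)\mid 2n$ and $v_{p}(pB_{2n})\ge 1$ otherwise, which is exactly $W_{2n}\in\mathbb{Z}$. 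This closes the induction, the base case being the direct check $B_{2}=\tfrac{1}{6}$, $D_{2}=6=\prod_{(p-1)\mid 2}p$.

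The step I expect to be the main obstacle is the uniform $p$-adic bookkeeping in the inductive step: one must weigh the denominators $1/k$ from Faulhaber's formula against the worst-case denominators of the lower Bernoulli numbers supplied by the inductive hypothesis, confirm that the crude bound $v_{p}(k)\le k-2$ is enough, and dispose of the lone exceptional pair at $p=2$ by hand. Organizing the induction as a statement quantified over all primes at once — rather than one prime at a time — is what keeps the argument clean.
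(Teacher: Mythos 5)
The paper states this classical von Staudt--Clausen theorem without proof, pointing only to Chapter~15 of Ireland and Rosen, so there is no internal argument to compare yours against; your proposal supplies a correct, self-contained proof along the standard elementary route (primitive-root evaluation of $S_m(p)$ modulo $p$, Faulhaber's formula with the $\frac{1}{k}\binom{m}{k-1}$ coefficients, and strong induction on the even index). I checked the $p$-adic bookkeeping you flagged as the delicate point: the bound $v_p(k)\le k-2$ does hold for all $k\ge 3$ and for $k=2$ with $p$ odd (since $p^{v_p(k)}\le k$ forces $v_p(k)\le \log_2 k$), the lone exception $(p,k)=(2,2)$ is correctly disposed of because $B_{2n-1}=0$ for $n\ge 2$, and the surviving tail indices $2n+1-k\in\{0,1\}$ or even are all covered by the inductive hypothesis together with $B_0=1$ and $B_1=-\tfrac{1}{2}$. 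The resulting congruence $pB_{2n}\equiv S_{2n}(p)$ in $\mathbb{Z}_p$ then yields $v_p(B_{2n})=-1$ exactly when $(p-1)\mid 2n$ and $v_p(B_{2n})\ge 0$ otherwise, which is the denominator formula. Two cosmetic remarks: in the geometric-sum step it is cleaner to multiply through by $g^m-1$ (a unit modulo $p$ when $(p-1)\nmid m$) than to write the quotient; and the opening reduction via $W_{2n}\in\mathbb{Z}$ is dispensable, since your induction already produces the valuations directly.
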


In particular, the denominator of $B_{2n}$ is even (actually always divisible
by $6$) and it is square-free. In the case of the hypergeometric Bernoulli 
numbers, computer experiments suggest an extension of these properties. Let 
\begin{equation}
\mathfrak{D}(b) = \left\{ \text{denominator}\left(B_{n}^{(b)} \right): \, 
n \geq 0 \right\}.
\end{equation}
\noindent
The examples 
\begin{eqnarray}
\mathfrak{D}(2) & = &  \left\{ 1, \, 3, \, 18, \, 90, \, 270, \, 1134, \, 5670, \, 
2430, \cdots \right\} \\
\mathfrak{D}(3) & = & 
\left\{ 1, \, 4, \, 40, \, 160, \, 5600, \, 896, \, 19200, \, 
76800, \cdots \right\}  \nonumber \\
\mathfrak{D}(4) & = &  \left\{ 1, \, 5, \, 75, \, 875, \, 26250, \, 78750, \, 
918750, \, 
3093750, \cdots \right\}, \nonumber
\end{eqnarray}
\noindent
show that $\mathfrak{D}(b)$ contains an initial segment of \textit{odd} 
numbers. 

\begin{conjecture}
Let $\alpha(b)$ be the number of odd terms at the beginning of 
$\mathfrak{D}(b)$. Then
\begin{equation}
\alpha(b) = \begin{cases}
\nu_{2}(b) + 1 & \quad \text{ if }b \not \equiv 0 \bmod 4 \\
2^{\nu_{2}(b)} & \quad \text{ if }b \equiv 0 \bmod 4. 
\end{cases}
\end{equation}
\end{conjecture}

The prime factorization of 
$D_{2n} = \text{den}(B_{2n})$, shows 
that if $p$ is a prime dividing $D_{2n}$, then $p \leq 2n+1$. 
Numerical evidence of the corresponding statement for 
$B_{n}^{(b)}$ leads to the next conjecture. 

\begin{conjecture}
Every prime $p$ dividing the denominator of $B_{n}^{(b)}$ satisfies 
$p \leq n+b$.
\end{conjecture}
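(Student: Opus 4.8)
The plan is to reduce the conjecture to a purely $p$-adic statement and then run an easy induction off Howard's recurrence. Fix $b\in\mathbb N$ (so that $B_n^{(b)}$ is rational), and write $\nu_p$ for the $p$-adic valuation. The assertion ``no prime dividing $\operatorname{den}\!\left(B_n^{(b)}\right)$ exceeds $n+b$'' is equivalent to
\[
p \text{ prime},\ p>n+b \ \Longrightarrow\ \nu_p\!\left(B_n^{(b)}\right)\ge 0 .
\]
By Theorem~\ref{thm-phi1b} and \eqref{howard-gen} we have $B_n^{(b)}=A_{b,n}$, so these numbers satisfy $B_0^{(b)}=1$ together with Howard's recurrence
\[
\sum_{r=0}^{n}\binom{n+b}{r}B_r^{(b)} = 0 \qquad (n\ge 1),
\]
and isolating the top term gives, for $n\ge 1$,
\[
B_n^{(b)} \;=\; -\binom{n+b}{n}^{-1}\sum_{r=0}^{n-1}\binom{n+b}{r}B_r^{(b)} .
\]

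I would prove the displayed implication by strong induction on $n$, for all primes $p$ simultaneously. The case $n=0$ is trivial because $B_0^{(b)}=1$. For $n\ge 1$, fix a prime $p$ with $p>n+b$ and assume the implication for every $m<n$. For $0\le r\le n$ the binomial coefficient $\binom{n+b}{r}=\tfrac{(n+b)!}{r!\,(n+b-r)!}$ is a ratio of factorials of integers all strictly less than $p$, so $\nu_p\!\left(\binom{n+b}{r}\right)=0$; in particular $\binom{n+b}{n}^{-1}$ is a $p$-adic unit. For each $r$ with $0\le r\le n-1$ we have $p>n+b>r+b$, so the inductive hypothesis gives $\nu_p\!\left(B_r^{(b)}\right)\ge 0$. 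Hence every summand $\binom{n+b}{r}B_r^{(b)}$ has non-negative valuation, the sum does too, and multiplying by the unit $-\binom{n+b}{n}^{-1}$ cannot lower the valuation; therefore $\nu_p\!\left(B_n^{(b)}\right)\ge 0$, closing the induction.

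The engine of the argument is just that the hypothesis $p>n+b$ makes every rational number occurring in the recurrence --- crucially the inverted leading coefficient $\binom{n+b}{n}^{-1}$ --- a $p$-adic unit, so the recurrence can never inject a new prime $p>n+b$ into a denominator, while at $n=0$ there is no denominator to begin with. I do not anticipate a genuine obstacle for this (weak) statement; the only point requiring a word of justification is the $p$-adic unitness of $\binom{n+b}{r}$ when $n+b<p$, which follows from Legendre's formula (or Kummer's theorem on carries). What is genuinely hard is the \emph{exact} shape of $\operatorname{den}\!\left(B_n^{(b)}\right)$ --- a von Staudt--Clausen analogue --- and the precise count $\alpha(b)$ of initial odd terms of $\mathfrak D(b)$: establishing those would require extracting, modulo $p$, the precise contribution of the terms $r$ in a fixed residue class (the analogue of the terms $r\equiv 0\bmod(p-1)$ in Witt's proof of von Staudt--Clausen), and that finer analysis is not supplied by the elementary induction above.
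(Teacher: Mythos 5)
The paper does not prove this statement at all: it is left as a conjecture, supported only by the computations reported immediately afterwards (``verified up to $b=1000$''). Your argument, by contrast, actually settles it, and I see no gap. The key facts you use are all available in the paper: $B_n^{(b)}=A_{b,n}$ for $b\in\mathbb{N}$ (Theorem \ref{thm-phi1b} together with \eqref{howard-gen}), Howard's recurrence $\sum_{r=0}^{n}\binom{n+b}{r}B_r^{(b)}=0$ with $B_0^{(b)}=1$, and the elementary observation that for $p>n+b$ every $\binom{n+b}{r}$ with $0\le r\le n$ is a $p$-adic unit (its numerator $(n+b)!$ is already prime to $p$), so in particular the leading coefficient $\binom{n+b}{n}$ can be inverted without introducing $p$ into a denominator; the strong induction then closes cleanly since $p>n+b>r+b$ for $r<n$. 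Two small points worth making explicit if this were written up: (i) the argument requires $b\in\mathbb{N}$, which is implicit in the paper's discussion (the identification with $A_{b,n}$ and the notion of ``denominator'' both presuppose it); (ii) the same conclusion could in principle be extracted from the recurrence in the Note following Theorem \ref{thm-linear} combined with \eqref{bern-terms-zeta}, but Howard's integer-coefficient recurrence makes the $p$-adic bookkeeping far cleaner. Your closing remark is also accurate: this valuation bound says nothing about the finer structure of the denominators (the von Staudt--Clausen analogue or the count $\alpha(b)$), which would require a genuinely different, residue-class-by-residue-class analysis.
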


These two conjectures have been verified up to $b = 1000$. 

\begin{note}
It is no longer true that the denominators are square-free. For 
example $B_{7}^{(3)} = 7/76800$ and $76800 = 2^{10} \cdot 3 \cdot 5^{2}$. 
\end{note}

\section{A probabilistic approach} 
\label{sec-prob} 

This section presents an interpretation of the Kummer function $\Phi_{a,b}(z)$
as the expectation of a complex random variable. For a random variable 
$\mathfrak{R}$ with a continuous distribution function $r(x)$, the expectation
operator is defined by 
\begin{equation}
\mathbb{E}u(\mathfrak{R}) = \int_{\mathbb{R}} u(x) r(x) \, dx,
\end{equation}
\noindent
for the class of functions $u$ for which the integral is finite. 

The techniques employed here 
were recently used in 
\cite{amdeberhan-2013e} to solve a problem proposed by D. Zeilberger related 
to the Narayana polynomials discussed in \cite{lasallem-2012a}.

\begin{definition}
\label{def-beta}
Let $a, \, b > 0$. The 
random variable $\mathfrak{B}_{a,b}$ is called \textit{beta distributed}, 
or simply \textit{a beta random variable}, if 
its distribution function is given by 
\begin{equation}
\label{def-beta11}
f_{\mathfrak{B}_{a,b}}(x) = \begin{cases}
    \frac{1}{B(a,b)} x^{a-1}(1-x)^{b-1} & \quad \text{ if } 0 \leq x \leq 1 \\
      0 & \quad \text{ otherwise.}
\end{cases}
\end{equation}
\noindent 
Here $B(a,b)$ is the beta function \eqref{beta-def}.
\end{definition}

The integral representation \cite[13.2.1]{abramowitz-1972a} 
\begin{equation}
\Phi_{a,b}(z) = \pFq11{a}{a+b}{z} = \frac{1}{B(a,b)} 
\int_{0}^{1} e^{tz} t^{a-1}(1-t)^{b-1} \, dt,
\end{equation}
\noindent
shows that $\Phi_{a,b}(z)$ is the moment generating function of a beta 
random variable $\mathfrak{B}_{a,b}$:
\begin{equation}
\mathbb{E} \left( e^{z \mathfrak{B}_{a,b}} \right) = \Phi_{a,b}(z),
\end{equation}
\noindent
where $\mathbb{E}$ is the expectation operator. 

\smallskip

This representation is used to construct a new random variable. Some 
preliminary discussion is given first. 

\smallskip 

A random variable $\Gamma$ is said to be \textit{exponentially distributed} if 
its distribution function is given by 
\begin{equation}
\label{exp-distr}
f_{\Gamma}(x) = \begin{cases} 
e^{-x} & \quad \text{ if }  x \geq  0, \\
0 & \quad \text{ elsewhere.}
\end{cases}
\end{equation}
\noindent
The moment generating function of an exponentially distributed random 
variable $\Gamma$ is 
\begin{equation}
\mathbb{E} \left[ e^{z \Gamma} \right]  = \frac{1}{1-z}, \text{ for } |z| < 1.
\end{equation}

\begin{note}
The authors hope that not too much confusion will be created by using 
the symbol $\Gamma$ for this kind of random variables. The choice of 
name is clear: if $\Gamma$ is exponentially distributed, then 
\begin{equation}
\mathbb{E} \left[ \Gamma^{\alpha-1} \right] = \Gamma(\alpha).
\end{equation}
\end{note}

Consider a sequence $\{\Gamma_{k} \}_{k \geq 1}$ of independent 
identically distributed 
random variables, each with the same exponential distribution 
\eqref{exp-distr}. 

\begin{definition}
Let $\{z_{a,b;k}: \, k \in \mathbb{N} \}$ be the collection of zeros of the 
Kummer function $\Phi_{a,b}(z)$. The complex-vaued random variable 
$\mathfrak{Z}_{a,b}$ is defined by 
\begin{equation}
\mathfrak{Z}_{a,b} = 
- \frac{a}{a+b} + \sum_{k=1}^{\infty} \frac{\Gamma_{k}-1}{z_{a,b;k}}.
\label{Z-stoch}
\end{equation}
\end{definition}

Some properties of $\mathfrak{Z}_{a,b}$ are given below. 
The main relation between $\mathfrak{Z}_{a,b}$ and the Kummer function 
$\Phi_{a,b}(z)$ is stated first. 

\begin{theorem}
\label{Z-def}
The complex-valued random variable $\mathfrak{Z}_{a,b}$ satisfies
\begin{equation}
\mathbb{E} e^{z \mathfrak{Z}_{a,b}} = \frac{1}{\Phi_{a,b}(z)}.
\end{equation}
\end{theorem}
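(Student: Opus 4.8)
The plan is to compute the moment generating function $\mathbb{E}e^{z\mathfrak{Z}_{a,b}}$ directly from the defining series \eqref{Z-stoch}, using independence of the $\Gamma_k$ to factor the expectation of the exponential of a sum into a product of expectations, and then recognize the resulting infinite product as the Hadamard factorization from Theorem \ref{thm-product}.

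First I would write
\begin{equation*}
\mathbb{E}e^{z\mathfrak{Z}_{a,b}} =
e^{-az/(a+b)} \, \mathbb{E}\exp\left( z\sum_{k=1}^{\infty} \frac{\Gamma_k - 1}{z_{a,b;k}} \right)
= e^{-az/(a+b)} \prod_{k=1}^{\infty} \mathbb{E}\exp\left( \frac{z(\Gamma_k-1)}{z_{a,b;k}} \right),
\end{equation*}
where the interchange of expectation and infinite product is justified (at least formally, and rigorously for $|z|$ small) by independence of the $\Gamma_k$. Next, for a single exponential random variable $\Gamma_k$ with $\mathbb{E}e^{w\Gamma_k} = 1/(1-w)$ for $|w|<1$, one has
\begin{equation*}
\mathbb{E}\exp\left( \frac{z(\Gamma_k-1)}{z_{a,b;k}} \right)
= e^{-z/z_{a,b;k}} \cdot \frac{1}{1 - z/z_{a,b;k}}.
\end{equation*}
Substituting this into the product gives
\begin{equation*}
\mathbb{E}e^{z\mathfrak{Z}_{a,b}} = e^{-az/(a+b)} \prod_{k=1}^{\infty}
\frac{e^{-z/z_{a,b;k}}}{1 - z/z_{a,b;k}}
= \left[ e^{az/(a+b)} \prod_{k=1}^{\infty} \left(1 - \frac{z}{z_{a,b;k}}\right) e^{z/z_{a,b;k}} \right]^{-1},
\end{equation*}
and the bracketed expression is exactly $\Phi_{a,b}(z)$ by Theorem \ref{thm-product}. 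This yields the claimed identity $\mathbb{E}e^{z\mathfrak{Z}_{a,b}} = 1/\Phi_{a,b}(z)$.

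The main obstacle is analytic rather than algebraic: one must justify that the random variable $\mathfrak{Z}_{a,b}$ is well-defined (i.e., that the series $\sum_k (\Gamma_k-1)/z_{a,b;k}$ converges almost surely) and that the expectation of the product equals the product of the expectations. Convergence of the defining series follows from $\sum_k 1/|z_{a,b;k}|^2 < \infty$ (a consequence of $\Phi_{a,b}$ having order $1$, hence genus $1$, which underlies the convergence of the Hadamard product in Theorem \ref{thm-product}): the centered terms $(\Gamma_k-1)/z_{a,b;k}$ have mean zero and summable variances, so the series converges in $L^2$ and almost surely. For the interchange, since each factor $e^{-z/z_{a,b;k}}/(1-z/z_{a,b;k})$ differs from $1$ by a term that is $O(z^2/z_{a,b;k}^2)$, the product converges absolutely and uniformly on compact subsets avoiding the zeros, and dominated convergence applied to the partial products delivers the identity for $z$ in a neighborhood of the origin; both sides being analytic there, the equality of formal power series (equivalently, of the two sides as meromorphic functions) follows. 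I would present the convergence and interchange arguments briefly and then let the computation above carry the rest.
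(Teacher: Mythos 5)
Your proposal is correct and follows essentially the same route as the paper: independence of the $\Gamma_k$ factors the expectation into a product of exponential moment generating functions, and the resulting infinite product is the reciprocal of the Hadamard factorization from Theorem \ref{thm-product}. The only difference is that you spell out the convergence and interchange justifications that the paper leaves implicit, which is a welcome addition but not a change of method.
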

\begin{proof}
The independence of the family $\{ \Gamma_{k} \}$ and 
Theorem \ref{thm-product} give 
\begin{equation}
\mathbb{E} e^{z \mathfrak{Z}_{a,b}} = 
e^{-az/(a+b)} \prod_{k=1}^{\infty} \frac{1}{1 - z/z_{a,b;k}} e^{-z/z_{a,b;k}}.
\end{equation}
\noindent
This is the stated result.
\end{proof}

\begin{lemma}
The symmetry property 
\begin{equation}
\mathfrak{Z}_{a,b} =  -1 - \mathfrak{Z}_{b,a},
\label{symmetry-z}
\end{equation}
\noindent
holds in the sense of distribution.
\end{lemma}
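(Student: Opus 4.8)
\textbf{Proof plan for the symmetry property $\mathfrak{Z}_{a,b} = -1 - \mathfrak{Z}_{b,a}$.}

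The plan is to prove the identity at the level of moment generating functions, appealing to Theorem \ref{Z-def}. By that theorem, $\mathbb{E}\, e^{z\mathfrak{Z}_{a,b}} = 1/\Phi_{a,b}(z)$, and similarly $\mathbb{E}\, e^{z\mathfrak{Z}_{b,a}} = 1/\Phi_{b,a}(z) = 1/\Phi_{b,a}(z)$, where I should be careful that $\Phi_{b,a}(z) = M(b,a+b;z)$ since the second parameter of $\Phi$ is the \emph{sum} of the two subscripts and is therefore symmetric in $a$ and $b$. The random variable $-1-\mathfrak{Z}_{b,a}$ then has moment generating function $\mathbb{E}\, e^{z(-1-\mathfrak{Z}_{b,a})} = e^{-z}\,\mathbb{E}\, e^{-z\mathfrak{Z}_{b,a}} = e^{-z}/\Phi_{b,a}(-z)$. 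So the whole statement reduces to the functional identity
\begin{equation*}
\Phi_{a,b}(z) = e^{z}\,\Phi_{b,a}(-z),
\end{equation*}
which is exactly Kummer's transformation $M(a,c;z) = e^{z} M(c-a,c;-z)$ specialized to $c = a+b$: then $c-a = b$ and $M(c-a,c;-z) = M(b,a+b;-z) = \Phi_{b,a}(-z)$. This is a classical identity (e.g.\ entry $13.2.39$ in \cite{abramowitz-1972a} or the corresponding entry in \cite{lozier-2003a}), so I would simply cite it.

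The key steps, in order, are: (1) recall from Theorem \ref{Z-def} that the distribution of a complex-valued random variable of this type is determined by its moment generating function $\mathbb{E}\, e^{z\cdot}$, so it suffices to match these functions; (2) compute $\mathbb{E}\, e^{z(-1-\mathfrak{Z}_{b,a})} = e^{-z}/\Phi_{b,a}(-z)$ using linearity of the exponent and Theorem \ref{Z-def}; (3) invoke Kummer's transformation in the form $\Phi_{a,b}(z) = e^{z}\Phi_{b,a}(-z)$ to conclude that this equals $1/\Phi_{a,b}(z) = \mathbb{E}\, e^{z\mathfrak{Z}_{a,b}}$; (4) conclude equality in distribution. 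Alternatively, and perhaps more transparently, one can prove the functional identity directly from the product representation: Theorem \ref{thm-product} gives
\begin{equation*}
\Phi_{b,a}(-z) = e^{-bz/(a+b)}\prod_{k}\left(1 + \frac{z}{z_{b,a;k}}\right)e^{-z/z_{b,a;k}},
\end{equation*}
and since $\{z_{b,a;k}\} = \{-z_{a,b;k}\}$ — the zeros of $M(b,a+b;w)$ are the negatives of the zeros of $M(a,a+b;w)$, again by Kummer's transformation — the product rearranges into $e^{-bz/(a+b)}\prod_{k}(1 - z/z_{a,b;k})e^{z/z_{a,b;k}} = e^{-bz/(a+b)}e^{-az/(a+b)}\Phi_{a,b}(z) = e^{-z}\Phi_{a,b}(z)$.

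The main obstacle, such as it is, is purely bookkeeping: one must be careful that the subscript notation $\Phi_{a,b} = M(a,a+b;z)$ is not literally symmetric (the \emph{first} argument changes when swapping $a$ and $b$, but the second argument $a+b$ does not), and one must verify that "equality of moment generating functions implies equality in distribution" is legitimate for these complex-valued random variables — here one relies on the same analyticity/identity-theorem reasoning already implicitly used in the proof of Theorem \ref{Z-def}, since both sides are analytic in a neighborhood of $z=0$ (the radius of convergence being positive because $\min_k |z_{a,b;k}| > 0$). No genuine analytic difficulty arises; the content is entirely Kummer's transformation together with the defining representations already established.
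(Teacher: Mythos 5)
Your proposal is correct and follows essentially the same route as the paper: the paper's proof consists of the single remark that the symmetry follows from Kummer's transformation $\Phi_{a,b}(z)=e^{z}\Phi_{b,a}(-z)$, which is precisely the identity you isolate and apply at the level of moment generating functions via Theorem \ref{Z-def}. Your write-up simply supplies the bookkeeping the paper leaves implicit.
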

\begin{proof}
The symmetry property follows from Kummer transformation 
\begin{equation}
\pFq11{a}{b}{z} = e^{z} \pFq11{b-a}{b}{-z}.
\label{kummer-trans}
\end{equation}
\end{proof}

Now consider $\mathfrak{B}_{a,b}$ to be a beta-distributed random variable 
independent of $\mathfrak{Z}_{a,b}$. Their 
moment generating functions satisfy
\begin{equation}
1 = \mathbb{E} \left[ e^{z \mathfrak{B}_{a,b}} \right] \times 
\mathbb{E} \left[ e^{z \mathfrak{Z}_{a,b}} \right] = 
\mathbb{E} \left[ e^{z ( \mathfrak{B}_{a,b}+ \mathfrak{Z}_{a,b})} \right].
\end{equation}
\noindent
Expanding in a power series yields the identity
\begin{equation}
\mathbb{E} \left[ ( \mathfrak{B}_{a,b} + \mathfrak{Z}_{a,b} )^{n} \right] = 
\delta_{n} = 
\begin{cases} 
1 & \quad \text{ if } n = 0, \\
0 & \quad \text{ if } n \neq 0.
\end{cases}
\label{expectation-1}
\end{equation}
\noindent
This is now used 
to provide a probabilistic representation of an 
analytic function.

\begin{theorem}
\label{thm-taylor}
Let $\mathfrak{B}_{a,b}$ and $\mathfrak{Z}_{a,b}$ as before. Then, if $f$ is 
an analytic function,
\begin{equation}
\mathbb{E} f\left(z+ \mathfrak{B}_{a,b} + \mathfrak{Z}_{a,b} \right) = f(z),
\end{equation}
\noindent
provided the expectation is finite.
\end{theorem}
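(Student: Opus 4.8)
The plan is to expand $f$ in a Taylor series about the point $z$ and push the expectation through the series term by term, using the moment identity \eqref{expectation-1}. Writing $f(z+w) = \sum_{n=0}^{\infty} \frac{f^{(n)}(z)}{n!} w^{n}$ and substituting $w = \mathfrak{B}_{a,b} + \mathfrak{Z}_{a,b}$, linearity of expectation gives, at least formally,
\[
\mathbb{E} f\left(z+ \mathfrak{B}_{a,b} + \mathfrak{Z}_{a,b} \right) =
\sum_{n=0}^{\infty} \frac{f^{(n)}(z)}{n!} \, \mathbb{E}\left[ \left( \mathfrak{B}_{a,b} + \mathfrak{Z}_{a,b} \right)^{n} \right] = \sum_{n=0}^{\infty} \frac{f^{(n)}(z)}{n!} \delta_{n} = f(z),
\]
since every term with $n \geq 1$ vanishes by \eqref{expectation-1} and the $n=0$ term is $f(z)$.

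The steps, in order, would be: (i) fix $z$ in a region where $f$ is analytic and write the Taylor expansion of $f(z+w)$ with a quantitative bound on the radius of convergence and on $f^{(n)}(z)$ via Cauchy estimates; (ii) record from Theorem \ref{Z-def} (or directly from the construction \eqref{Z-stoch}) that $\mathbb{E} e^{t(\mathfrak{B}_{a,b}+\mathfrak{Z}_{a,b})} \equiv 1$ for $t$ in a neighborhood of $0$, so that $\mathfrak{B}_{a,b}+\mathfrak{Z}_{a,b}$ has all moments finite and in fact $\mathbb{E}\,|\mathfrak{B}_{a,b}+\mathfrak{Z}_{a,b}|^{n}$ grows at most geometrically (from finiteness of the mgf of $|\mathfrak{B}_{a,b}|+|\mathfrak{Z}_{a,b}|$, bounding $\mathfrak{B}_{a,b}$ which lies in $[0,1]$ and controlling $\mathfrak{Z}_{a,b}$ through $\sum_k 1/|z_{a,b;k}|$, which converges because $\Phi_{a,b}$ has order $1$ with genus $1$); (iii) combine (i) and (ii) to dominate $\sum_n \frac{|f^{(n)}(z)|}{n!}\mathbb{E}|\mathfrak{B}_{a,b}+\mathfrak{Z}_{a,b}|^{n}$ by a convergent series, legitimizing the interchange of $\mathbb{E}$ and $\sum$ by dominated convergence (or Fubini–Tonelli); (iv) apply \eqref{expectation-1} to collapse the sum to $f(z)$.

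The main obstacle is step (iii): justifying the interchange of expectation and infinite summation. This requires an honest estimate on the moments $\mathbb{E}|\mathfrak{B}_{a,b}+\mathfrak{Z}_{a,b}|^{n}$ showing they do not grow faster than $C^{n} n!$ relative to the Taylor coefficients of $f$ on a slightly smaller disk — equivalently, that the radius of convergence of $\sum \frac{f^{(n)}(z)}{n!} x^{n}$ together with the tail behavior of the moment sequence leaves a nonempty overlap. The cleanest route is to invoke that the moment generating function $\mathbb{E} e^{t(\mathfrak{B}_{a,b}+\mathfrak{Z}_{a,b})} = 1/(\Phi_{a,b}(t)\cdot 1/\Phi_{a,b}(t))^{-1}$... more precisely that $\mathbb{E} e^{t \mathfrak{B}_{a,b}} = \Phi_{a,b}(t)$ and $\mathbb{E} e^{t\mathfrak{Z}_{a,b}} = 1/\Phi_{a,b}(t)$ are both entire (resp. meromorphic with poles only at the zeros $z_{a,b;k}$, all nonzero), so $\mathbb{E} e^{t|\mathfrak{B}_{a,b}|} < \infty$ trivially and $\mathbb{E} e^{t|\mathfrak{Z}_{a,b}|} < \infty$ for $|t|$ small, whence $\mathbb{E}|\mathfrak{B}_{a,b}+\mathfrak{Z}_{a,b}|^{n} \le \rho^{-n} n!$ for some $\rho>0$; then the interchange is valid whenever $f$ is analytic on a disk of radius exceeding the distance that accommodates this $\rho$, and one concludes by analytic continuation in $z$ that the identity holds wherever $f$ is analytic. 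A minor point worth stating explicitly is that $\mathfrak{Z}_{a,b}$ is complex-valued, so "expectation" and the Taylor expansion are applied coordinatewise to real and imaginary parts, and the convergence of the defining series \eqref{Z-stoch} (hence measurability and integrability of $\mathfrak{Z}_{a,b}$) is inherited from the genus-$1$ product in Theorem \ref{thm-product}.
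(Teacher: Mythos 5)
Your proposal is correct and follows essentially the same route as the paper: the paper reduces to $f(z)=z^{p}$, applies the binomial theorem together with the conjugacy identity \eqref{expectation-1}, and extends implicitly to analytic $f$, which is exactly your Taylor-expansion argument (you are simply more explicit about the interchange of $\mathbb{E}$ and $\sum$, which the paper absorbs into the hypothesis ``provided the expectation is finite''). One small inaccuracy in your optional justification: since $z_{a,b;n}\sim \pm 2n\pi\imath$, the series $\sum_{k}1/|z_{a,b;k}|$ actually diverges; the convergence of \eqref{Z-stoch} and the finiteness of the moment generating function near $0$ instead rest on the summands $(\Gamma_{k}-1)/z_{a,b;k}$ having mean zero together with $\sum_{k}1/|z_{a,b;k}|^{2}<\infty$.
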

\begin{proof}
It suffices to establish the result for $f(z) = z^{p}$, with $p \in 
\mathbb{N}$. This follows directly from \eqref{expectation-1}:
\begin{eqnarray*}
\mathbb{E} \left[ ( z + \mathfrak{B}_{a,b} + \mathfrak{Z}_{a,b} )^{p} \right] 
& = &  \sum_{k=0}^{p} \binom{p}{k} z^{p-k} \mathbb{E} \left[ 
\left(\mathfrak{B}_{a,b} + \mathfrak{Z}_{a,b} \right)^{k} \right] \\
 & = & z^{p}.
\end{eqnarray*}
\end{proof}

\begin{note}
Hassen and Nguyen \cite{hassen-2008a} show that 
\begin{equation}
\int_{0}^{1} x^{a-1}(1-x)^{b-1} B_{n}^{(a,b)}(x) \, dx = 
\begin{cases}
B(a,b) & \quad \text{ if } n = 0 \\
0 & \quad \text{ if } n \neq  0.
\end{cases}
\end{equation}
\noindent
This follows directly from Theorem \ref{thm-taylor} by taking 
$f(z) = z^{n}$ and then $z=0$.
\end{note}

The next item in this section gives a probabilistic point of view of the 
linear recurrence in Theorem \ref{thm-linear}. Some 
preliminary background is discussed first. 

\begin{definition}
\label{def-conj}
The independent random variables $X$ and $Y$ are called \textit{conjugate} if 
\begin{equation}
\mathbb{E} \left[(X+Y)^{n} \right] = \delta_{n}.
\end{equation}
\end{definition}

For example, \eqref{expectation-1} shows that 
 $\mathfrak{B}_{a,b}$ and $\mathfrak{Z}_{a,b}$ are conjugate.

\begin{definition}
Let $X$ be a random variable. The \textit{moment generating function} of $X$ is 
\begin{equation}
\varphi_{X}(t) = \sum_{n=0}^{\infty} \mathbb{E} \left[ X^{n} \right] 
\frac{t^{n}}{n!}.
\end{equation}
The sequence of \textit{cumulants} $\kappa_{X}(n)$ is defined by
\begin{equation}
\log \varphi_{X}(z) = \sum_{m=1}^{\infty} \kappa_{X}(m) \frac{z^{m}}{m!}.
\end{equation}
\end{definition}

\begin{example}
\label{cumulants-beta}
The cumulant generating function for the $\mathfrak{B}_{a,b}$ distribution is
\begin{equation}
\log \Phi_{a,b}(z) = \frac{a}{a+b}z - \sum_{p=2}^{\infty} \frac{z^{p}}{p}
\zeta_{a,b}^{H}(p),
\end{equation}
\noindent
so the cumulants are 
\begin{equation}
\kappa_{\mathfrak{B}_{a,b}}(1) = \frac{a}{a+b} \text{ and }
\kappa_{\mathfrak{B}_{a,b}}(p) = - (p-1)! \zeta_{a,b}^{H}(p) \text{ for } 
p \geq 2.
\end{equation}
\end{example}

For a general random variable $X$, the moments $\mathbb{E} X^{n}$ and its 
cumulants $\kappa_{X}(n)$ are related by 
\begin{equation}
\kappa_{X}(n) = \mathbb{E} X^{n} - \sum_{j=1}^{n-1} \binom{n-1}{j-1} 
\kappa_{X}(j) \mathbb{E} X^{n-j}.
\label{mom-cum}
\end{equation}
\noindent
See \cite{smith-1995a} for details. In the special case of a random variable 
with a beta distribution, this gives 
\begin{equation}
(n-1)! \sum_{j=2}^{n} \frac{B(a+n-j,b)}{(n-j)!} \zeta_{a,b}^{H}(j) = 
\frac{a}{a+b} B(a+n-1,b) - B(a+n,b)
\end{equation}
\noindent
that is equivalent to the linear recurrence identity in 
Theorem \ref{thm-linear}.

\section{The generalized Bernoulli polynomials} 
\label{sec-generalized} 

K. Dilcher introduced in \cite{dilcher-2002a} the generalized Bernoulli 
polynomials by 
\begin{equation}
\frac{e^{xz}}{\Phi_{1,b}(z)} = \sum_{k=0}^{\infty} B_{k}^{(b)}(x) 
\frac{z^{k}}{k!}.
\end{equation}

These polynomials are now interpreted as moments.

\begin{theorem}
\label{dilcher-prob}
The generalized Bernoulli polynomials are given by 
\begin{equation}
B_{k}^{(b)}(x) = \mathbb{E}( x + \mathfrak{Z}_{1,b})^{k}.
\end{equation}
\end{theorem}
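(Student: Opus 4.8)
The plan is to identify the moment generating function of the random variable $x + \mathfrak{Z}_{1,b}$ and match it against Dilcher's defining generating function. First I would compute
\[
\mathbb{E}\, e^{z(x + \mathfrak{Z}_{1,b})} = e^{xz} \, \mathbb{E}\, e^{z \mathfrak{Z}_{1,b}},
\]
which is legitimate since $x$ is deterministic. By Theorem \ref{Z-def} (the $a=1$ case), $\mathbb{E}\, e^{z\mathfrak{Z}_{1,b}} = 1/\Phi_{1,b}(z)$, so the moment generating function of $x + \mathfrak{Z}_{1,b}$ equals $e^{xz}/\Phi_{1,b}(z)$. This is precisely the generating function that Dilcher uses to define $B_k^{(b)}(x)$, namely $e^{xz}/\Phi_{1,b}(z) = \sum_{k=0}^{\infty} B_k^{(b)}(x) z^k/k!$.

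Next I would extract coefficients. Expanding the left-hand side as a power series in $z$ gives $\sum_{k=0}^{\infty} \mathbb{E}\big[(x+\mathfrak{Z}_{1,b})^k\big] z^k/k!$, since the coefficient of $z^k/k!$ in a moment generating function is the $k$-th moment. Comparing with Dilcher's expansion term by term yields $B_k^{(b)}(x) = \mathbb{E}(x + \mathfrak{Z}_{1,b})^k$, which is the claim. One could also phrase this without series manipulation by invoking Theorem \ref{thm-taylor}: with $a = 1$ one has $\mathbb{E}\, f(x + \mathfrak{B}_{1,b} + \mathfrak{Z}_{1,b}) = f(x)$, and running the argument there with $f$ a monomial already encodes the conjugacy that underlies the present identity, but the direct mgf computation is the cleanest route.

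The only genuine point requiring care is the analytic justification for equating the two power series: one must know that the series defining $\mathbb{E}\, e^{z\mathfrak{Z}_{1,b}}$ converges in a neighborhood of $z = 0$ and that termwise extraction of coefficients is valid there. This is exactly the content already established in the proof of Proposition \ref{kummer-90} and Theorem \ref{Z-def}, where the infinite product over the zeros $z_{1,b;k}$ is shown to have a positive radius of convergence because $\min_k |z_{1,b;k}| \neq 0$; equivalently, $1/\Phi_{1,b}(z)$ is analytic near the origin since $\Phi_{1,b}(0) = 1$. Granting that, the identification of Taylor coefficients is immediate, so I do not expect any substantive obstacle — this is essentially a one-line consequence of Theorem \ref{Z-def} together with Dilcher's definition.
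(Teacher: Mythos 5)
Your proposal is correct and is exactly the argument the paper intends: its proof of this theorem is the one-line remark that it ``follows directly from Theorem \ref{Z-def},'' and your computation of the moment generating function $\mathbb{E}\,e^{z(x+\mathfrak{Z}_{1,b})} = e^{xz}/\Phi_{1,b}(z)$ followed by matching Taylor coefficients against Dilcher's defining generating function is precisely the intended expansion of that remark.
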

\begin{proof}
This follows directly from Theorem \ref{Z-def}.
\end{proof}

Dilcher \cite{dilcher-2002a} used generating functions to provide the 
following recursion for these polynomials.

\begin{theorem}[Dilcher]
\label{dilcher-1}
The generalized Bernoulli polynomials $B_{k}^{(b)}(x)$ satisfy
\begin{equation}
B_{k}^{(b)}(x+1) = \sum_{p=0}^{b-1} \binom{k}{p} B_{k-p}^{(b)}(x) + 
\binom{k}{b}x^{k-b}.
\end{equation}
\end{theorem}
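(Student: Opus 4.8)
The plan is to work entirely with the defining generating function $e^{xz}/\Phi_{1,b}(z) = \sum_{k\ge0} B_k^{(b)}(x)z^k/k!$ and to exploit the explicit formula for $\Phi_{1,b}(z)$ supplied by Theorem \ref{thm-phi1b}. First I would rewrite the generating function of the shifted polynomials $B_k^{(b)}(x+1)$ as $e^{(x+1)z}/\Phi_{1,b}(z) = e^z\cdot\bigl(e^{xz}/\Phi_{1,b}(z)\bigr)$, so that the recursion becomes a statement about how multiplication by $e^z$ acts on the coefficient sequence $\{B_k^{(b)}(x)\}$.

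The key algebraic step is the splitting $e^z = \bigl(e^z - \sum_{p=0}^{b-1} z^p/p!\bigr) + \sum_{p=0}^{b-1} z^p/p!$. By Theorem \ref{thm-phi1b} the first bracket equals $(z^b/b!)\,\Phi_{1,b}(z)$, so multiplying $e^{xz}/\Phi_{1,b}(z)$ by it gives simply $z^b e^{xz}/b!$, the factor $\Phi_{1,b}(z)$ cancelling; multiplying by the second, polynomial, factor gives the Cauchy product $\bigl(\sum_{j\ge0} B_j^{(b)}(x)z^j/j!\bigr)\bigl(\sum_{p=0}^{b-1} z^p/p!\bigr)$. This yields the identity
\[
\sum_{k\ge0} B_k^{(b)}(x+1)\frac{z^k}{k!} = \frac{z^b e^{xz}}{b!} + \left(\sum_{j\ge0} B_j^{(b)}(x)\frac{z^j}{j!}\right)\left(\sum_{p=0}^{b-1}\frac{z^p}{p!}\right).
\]

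Finally I would compare the coefficient of $z^k/k!$ on each side. Writing $z^b e^{xz}/b! = \sum_{m\ge0} x^m z^{m+b}/(m!\,b!)$ shows that the first term contributes $\binom{k}{b}x^{k-b}$, understood as $0$ when $k<b$, and the Cauchy product contributes $\sum_{p=0}^{b-1}\binom{k}{p}B_{k-p}^{(b)}(x)$; together these give exactly the claimed recursion. There is essentially no obstacle: the only point deserving a brief remark is that all manipulations take place among formal power series — equivalently among functions analytic in a punctured neighbourhood of $0$, since $\Phi_{1,b}(0)=1$ makes $1/\Phi_{1,b}$ analytic at the origin — so term-by-term coefficient comparison is legitimate, and the remaining binomial bookkeeping is routine.
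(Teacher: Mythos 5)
Your argument is correct, but it is not the route the paper takes: it is essentially Dilcher's original generating-function proof, which the paper explicitly mentions and then sets aside in favour of a probabilistic one. You multiply $e^{xz}/\Phi_{1,b}(z)$ by $e^{z}$ and split $e^{z}$ as $\bigl(e^{z}-\sum_{p=0}^{b-1}z^{p}/p!\bigr)+\sum_{p=0}^{b-1}z^{p}/p!$, using Theorem \ref{thm-phi1b} to recognize the first bracket as $(z^{b}/b!)\,\Phi_{1,b}(z)$ so that the division cancels; extracting the coefficient of $z^{k}/k!$ then produces exactly the two terms of the recursion, with $\binom{k}{b}x^{k-b}$ read as $0$ for $k<b$, which is the right convention. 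The paper instead uses the moment representation $B_{k}^{(b)}(x)=\mathbb{E}(x+\mathfrak{Z}_{1,b})^{k}$ of Theorem \ref{dilcher-prob}: replacing $x$ by $x+\mathfrak{B}_{1,b}$ and averaging, the conjugacy of $\mathfrak{B}_{1,b}$ and $\mathfrak{Z}_{1,b}$ (Theorem \ref{thm-taylor}) collapses the claimed identity to the deterministic statement $(x+1)^{k}=\sum_{p=0}^{b-1}\binom{k}{p}x^{k-p}+\mathbb{E}(x+\mathfrak{B}_{1,b-k})^{k}$, which is then established by iterated integration by parts against the beta density (Lemma \ref{exp-3}). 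Your version is shorter and more elementary, requiring only the closed form of $\Phi_{1,b}$ for integer $b$ and formal power-series manipulation near $z=0$ (legitimate, as you note, since $\Phi_{1,b}(0)=1$); the paper's version is deliberately chosen to exercise the conjugate-random-variable machinery that organizes the rest of Section \ref{sec-generalized}. Both proofs are sound.
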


A probabilistic proof is presented next. A preliminary result is stated 
first.

\begin{lemma}
Let $\mathfrak{B}_{a,b}$ be a random variable with  a beta distribution
and $g \in C^{1}[0,1]$. Then, for $a, b > 1$,
\begin{equation}
\mathbb{E} g'(x + \mathfrak{B}_{a,b}) = (a+b-1) 
\left[ \mathbb{E} g(x+ \mathfrak{B}_{a,b-1}) - \mathbb{E} g(x + 
\mathfrak{B}_{a-1,b}) \right].
\end{equation}
\noindent
For $a=1$ and $b > 1$
\begin{equation}
\mathbb{E} g'(x+ \mathfrak{B}_{1,b}) = -b g(x) + b \, \mathbb{E}g(x+ 
\mathfrak{B}_{1,b-1}).
\label{for-b=1}
\end{equation}
\end{lemma}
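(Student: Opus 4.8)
The plan is to prove both identities by straightforward integration by parts, exploiting the explicit form of the beta density.

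First, for the case $a, b > 1$, I would start from the definition
\begin{equation*}
\mathbb{E} g'(x + \mathfrak{B}_{a,b}) = \frac{1}{B(a,b)} \int_{0}^{1} g'(x+t) \, t^{a-1}(1-t)^{b-1} \, dt.
\end{equation*}
Integrating by parts, the boundary terms vanish because $a-1 > 0$ and $b-1 > 0$ force $t^{a-1}(1-t)^{b-1}$ to vanish at both endpoints. This leaves
\begin{equation*}
-\frac{1}{B(a,b)} \int_{0}^{1} g(x+t) \frac{d}{dt}\left[ t^{a-1}(1-t)^{b-1} \right] dt,
\end{equation*}
and the derivative is $(a-1)t^{a-2}(1-t)^{b-1} - (b-1)t^{a-1}(1-t)^{b-2}$. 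Recognizing each resulting integral as $B(a-1,b)$ or $B(a,b-1)$ times an expectation, and using the identities $B(a-1,b)/B(a,b) = (a+b-1)/(a-1)$ and $B(a,b-1)/B(a,b) = (a+b-1)/(b-1)$, the factors $(a-1)$ and $(b-1)$ cancel cleanly and I obtain
\begin{equation*}
\mathbb{E} g'(x+\mathfrak{B}_{a,b}) = (a+b-1)\left[ \mathbb{E} g(x+\mathfrak{B}_{a,b-1}) - \mathbb{E} g(x+\mathfrak{B}_{a-1,b}) \right],
\end{equation*}
as desired.

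For the case $a = 1$, $b > 1$, the density reduces to $b(1-t)^{b-1}$ on $[0,1]$ since $B(1,b) = 1/b$. Integrating $\mathbb{E} g'(x+\mathfrak{B}_{1,b}) = b \int_{0}^{1} g'(x+t)(1-t)^{b-1} \, dt$ by parts, the boundary term at $t=1$ still vanishes (because $b-1>0$), but the term at $t=0$ now survives and contributes $-b g(x)$. The remaining integral is $b(b-1)\int_0^1 g(x+t)(1-t)^{b-2}\, dt = b \, \mathbb{E} g(x+\mathfrak{B}_{1,b-1})$, giving \eqref{for-b=1}. There is no serious obstacle here; the only point requiring care is tracking which boundary terms vanish — this is exactly where the hypothesis $a=1$ changes the outcome — and bookkeeping the beta-function ratios so that the claimed coefficients emerge. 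The continuity assumption $g \in C^1[0,1]$ is precisely what justifies the integration by parts and guarantees all integrals are finite.
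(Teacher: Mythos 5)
Your proof is correct and uses exactly the same approach as the paper: integration by parts on the beta density, with the boundary terms vanishing when $a,b>1$ and the $t=0$ term surviving to give $-bg(x)$ when $a=1$, followed by the beta-function ratio bookkeeping. Your write-up is in fact slightly more explicit than the paper's, which stops at the integrated-by-parts expression and says ``the result follows by simplification.''
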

\begin{proof}
A direct calculation shows 
\begin{eqnarray*}
B(a,b) \mathbb{E} g'(x + \mathfrak{B}_{a,b})   & = & 
\int_{0}^{1} g'(x+ t) t^{a-1}(1-t)^{b-1} dt \\
 & = &  g(x+t)t^{a-1}(1-t)^{b-1}\Big{|}_{0}^{1}   \\
& & - (a-1) \int_{0}^{1} g(x+t)t^{a-2}(1-t)^{b-1} \, dt \\
& & + (b-1) \int_{0}^{1} g(x+t)t^{a-1}(1-t)^{b-2} \, dt. 
\end{eqnarray*}
\noindent
The result follows by simplification. The case $a=1$ is straightforward.
\end{proof}

The formula \eqref{for-b=1} can be 
extended directly to higher order derivatives. 

\begin{lemma}
\label{exp-3}
Let $g \in C^{k}[0,1]$ and $\mathfrak{B}_{1,b}$ as before. Then, provided
$b \geq k$, 
\begin{equation}
\mathbb{E} g^{(k)}(x + \mathfrak{B}_{1,b}) = 
- \sum_{\ell=1}^{k} \frac{b!}{(b - \ell)!} g^{(k-\ell)}(x) 
+ \frac{b!}{(b-k)!} \mathbb{E} g(x + \mathfrak{B}_{1,b-k}).
\end{equation}
\end{lemma}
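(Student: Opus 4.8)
The plan is to induct on $k$, with the case $k=1$ being exactly the identity \eqref{for-b=1} from the previous lemma. So suppose the claimed formula holds for some $k \geq 1$ and every $b \geq k$; I want to deduce it for $k+1$ and every $b \geq k+1$. First I would apply \eqref{for-b=1} to the function $g^{(k)} \in C^{1}[0,1]$, which gives
\begin{equation}
\mathbb{E} g^{(k+1)}(x + \mathfrak{B}_{1,b}) = -b\, g^{(k)}(x) + b\, \mathbb{E} g^{(k)}(x + \mathfrak{B}_{1,b-1}).
\end{equation}
Since $b - 1 \geq k$, the inductive hypothesis applies to the remaining expectation with $b-1$ in place of $b$:
\begin{equation}
\mathbb{E} g^{(k)}(x + \mathfrak{B}_{1,b-1}) = - \sum_{\ell=1}^{k} \frac{(b-1)!}{(b-1-\ell)!} g^{(k-\ell)}(x) + \frac{(b-1)!}{(b-1-k)!} \mathbb{E} g(x + \mathfrak{B}_{1,b-1-k}).
\end{equation}

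Substituting this back and simplifying is then pure bookkeeping. Using $b \cdot \frac{(b-1)!}{(b-1-\ell)!} = \frac{b!}{(b-1-\ell)!}$ turns the finite sum into $-\sum_{\ell=1}^{k} \frac{b!}{(b-1-\ell)!} g^{(k-\ell)}(x)$, and after the shift $\ell \mapsto \ell+1$ this is $-\sum_{m=2}^{k+1}\frac{b!}{(b-m)!}g^{(k+1-m)}(x)$; combined with $-b\,g^{(k)}(x) = -\frac{b!}{(b-1)!}g^{(k)}(x)$ (the $m=1$ term) one gets exactly $-\sum_{m=1}^{k+1}\frac{b!}{(b-m)!}g^{(k+1-m)}(x)$. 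Likewise $b \cdot \frac{(b-1)!}{(b-1-k)!} = \frac{b!}{(b-(k+1))!}$, which is precisely the desired coefficient of $\mathbb{E}g(x + \mathfrak{B}_{1,b-(k+1)})$. This closes the induction, at least for $b > k$.

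For the boundary value $b = k$ one reads $\mathfrak{B}_{1,0}$ as the unit point mass at $1$: this is the weak limit of $\mathfrak{B}_{1,\beta}$ as $\beta \to 0^{+}$, since for continuous $h$ one has $\mathbb{E}\,h(x+\mathfrak{B}_{1,\beta}) = \int_{0}^{1} h(x+t)\,\beta(1-t)^{\beta-1}\,dt \to h(x+1)$; with this convention \eqref{for-b=1} and the asserted formula extend down to $b=k$. Equivalently, one may simply run the induction for $b > k$ and pass to $b = k$ by continuity, noting that $\beta \mapsto \mathbb{E}\,h(x+\mathfrak{B}_{1,\beta})$ is continuous on $(0,\infty)$ and extends continuously to $\beta = 0$. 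I do not expect a genuine obstacle here; the only thing to watch is the index arithmetic in the finite sum and the factorial identity $b\,(b-1)!/(b-1-\ell)! = b!/(b-1-\ell)!$. An alternative to the induction would be to integrate by parts $k$ times directly in $\frac{1}{B(1,b)}\int_{0}^{1} g^{(k)}(x+t)(1-t)^{b-1}\,dt$, but reusing \eqref{for-b=1} keeps the computation shortest.
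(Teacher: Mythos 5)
Your induction on $k$ by repeated application of \eqref{for-b=1} is correct and is exactly what the paper intends --- it offers no written proof beyond the remark that \eqref{for-b=1} ``can be extended directly to higher order derivatives,'' and your bookkeeping of the factorials and the index shift checks out. Your explicit treatment of the boundary case $b=k$ (reading $\mathfrak{B}_{1,0}$ as the point mass at $1$, i.e.\ $\mathbb{E}\,g(x+\mathfrak{B}_{1,0})=g(x+1)$, which also makes \eqref{for-b=1} valid at $b=1$) is a welcome precision that the paper glosses over.
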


To prove Dilcher's theorem, replace $x$ by $x + \mathfrak{B}_{1,b}$ and 
take the expectation with respect to $\mathfrak{B}_{1,b}$ to see that 
Theorem \ref{dilcher-1} is equivalent to the identity 
\begin{equation}
(x+1)^{k} = \sum_{p=0}^{b-1} \binom{k}{p}x^{k-p} + 
\mathbb{E} ( x + \mathfrak{B}_{1,b-k})^{k}.
\end{equation}
\noindent
This is precisely the statement of Lemma \ref{exp-3} for $g(x) = x^{k}$. 

\smallskip

The expression for the polynomials $B_{k}^{(b)}(x)$ given in Theorem 
\ref{dilcher-prob} provides a natural way to extend them to a two-parameter 
family. 

\begin{definition}
The \textit{hypergeometric Bernoulli polynomials} are defined by 
\begin{equation}
B_{n}^{(a,b)}(x) = \mathbb{E} ( x + \mathfrak{Z}_{a,b})^{n}
\label{hyp-ber-poly-def}
\end{equation}
\noindent
and the \textit{hypergeometric Bernoulli numbers} by
\begin{equation}
B_{n}^{(a,b)} = B_{n}^{(a,b)}(0) = \mathbb{E} (\mathfrak{Z}_{a,b})^{n}.
\label{hyp-ber-numb-def}
\end{equation}
\end{definition}

\begin{note}
The case considered by Howard is $B_{n}^{(b)} = B_{n}^{(1,b)}$.
\end{note}

\begin{prop}
The exponential generating function for the polynomials 
$B_{n}^{(a,b)}(x)$ is given by
\begin{equation}
\sum_{n =0}^{\infty} B_{n}^{(a,b)}(x) \frac{z^{n}}{n!} = 
\frac{e^{xz}}{\Phi_{a,b}(z)}.
\end{equation}
\end{prop}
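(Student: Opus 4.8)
The plan is to derive the exponential generating function directly from the definition \eqref{hyp-ber-poly-def}, namely $B_{n}^{(a,b)}(x) = \mathbb{E}(x+\mathfrak{Z}_{a,b})^{n}$. First I would form the generating series and interchange the sum with the expectation operator:
\[
\sum_{n=0}^{\infty} B_{n}^{(a,b)}(x) \frac{z^{n}}{n!}
= \sum_{n=0}^{\infty} \mathbb{E}\big[(x+\mathfrak{Z}_{a,b})^{n}\big] \frac{z^{n}}{n!}
= \mathbb{E}\Big[ \sum_{n=0}^{\infty} \frac{\big(z(x+\mathfrak{Z}_{a,b})\big)^{n}}{n!} \Big]
= \mathbb{E}\, e^{z(x+\mathfrak{Z}_{a,b})}.
\]
Pulling the deterministic factor $e^{xz}$ out of the expectation gives $e^{xz}\,\mathbb{E}\, e^{z\mathfrak{Z}_{a,b}}$, and then invoking Theorem \ref{Z-def}, which states $\mathbb{E}\, e^{z\mathfrak{Z}_{a,b}} = 1/\Phi_{a,b}(z)$, yields the claimed identity $e^{xz}/\Phi_{a,b}(z)$.

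The one point that requires a word of justification is the interchange of summation and expectation. This is legitimate in a neighbourhood of $z=0$: since $1/\Phi_{a,b}(z)$ is analytic at the origin (because $\Phi_{a,b}(0)=1$), the moment generating function $\mathbb{E}\, e^{z\mathfrak{Z}_{a,b}}$ converges for $|z|$ small, which is exactly the condition guaranteeing that the moments $\mathbb{E}(\mathfrak{Z}_{a,b})^{n}$ do not grow too fast and that Fubini/Tonelli applies to the double series. Equivalently, one may simply observe that both sides are formal power series in $z$ and that equality of the coefficients of $z^{n}/n!$ is precisely the definition \eqref{hyp-ber-poly-def} together with the Taylor expansion of $e^{xz}/\Phi_{a,b}(z)$; this sidesteps any analytic subtlety entirely.

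There is no real obstacle here — the statement is essentially a repackaging of Theorem \ref{Z-def}, and the proof is two lines. The only thing to be careful about is not to over-formalize: the cleanest exposition is to say that the generating function of the moments of a random variable $x+\mathfrak{Z}_{a,b}$ is its moment generating function $\mathbb{E}\, e^{z(x+\mathfrak{Z}_{a,b})}$, factor out $e^{xz}$, and quote Theorem \ref{Z-def}. I would present it in that form.
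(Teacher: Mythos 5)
Your proof is correct and is exactly the argument the paper intends: the proposition is stated without proof precisely because it follows immediately from the definition $B_{n}^{(a,b)}(x)=\mathbb{E}(x+\mathfrak{Z}_{a,b})^{n}$ together with Theorem \ref{Z-def}, which is the route you take. Your remarks on the interchange of sum and expectation (or the formal-power-series reading) are a reasonable, if optional, addition.
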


The next result appears, in the special case $x=0$, as Proposition $2.1$ 
in \cite{dilcher-2002a}. The result gives a change of basis formula from 
$\left\{ B_{n}^{(a,b)}(x): \, n = 0, \, 1, \, 2, \, \cdots 
 \right\}$ to $\left\{ x^{n}: \, n = 0, \, 1, \, 2, \, \cdots \right\}$. 

\begin{theorem}
\label{thm-binomials}
The polynomial $B_{n}^{(a,b)}(x)$ satisfy $B_{0}^{(a,b)}(x) = 1$ and, for 
$n \geq 1$, 
\begin{equation}
\sum_{k=0}^{n} \binom{a+b+n-1}{k} \binom{a-1+n-k}{a-1} B_{k}^{(a,b)}(x) = 
(a+b)_{n} \frac{x^{n}}{n!}.
\end{equation}
\end{theorem}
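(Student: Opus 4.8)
The plan is to work with the exponential generating function
$\sum_{n\ge 0} B_n^{(a,b)}(x)\,z^n/n! = e^{xz}/\Phi_{a,b}(z)$ established in the preceding Proposition, and to clear the denominator. Writing this as
$e^{xz} = \Phi_{a,b}(z)\cdot\sum_{n\ge 0} B_n^{(a,b)}(x)\,z^n/n!$ and recalling the series
$\Phi_{a,b}(z)=\sum_{m\ge 0}\frac{(a)_m}{(a+b)_m}\frac{z^m}{m!}$, I would extract the coefficient of $z^n$ from both sides. The left side gives $x^n/n!$; the right side gives a Cauchy product $\sum_{k=0}^{n}\frac{(a)_{n-k}}{(a+b)_{n-k}}\frac{1}{(n-k)!}\cdot\frac{B_k^{(a,b)}(x)}{k!}$. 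Multiplying through by $n!$ and then by $(a+b)_n$ turns the statement into the claimed binomial identity, once the Pochhammer ratios are rewritten as binomial coefficients.

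The key bookkeeping step is the identity
$\binom{n}{k}\cdot n!\,\frac{(a)_{n-k}}{(a+b)_{n-k}(n-k)!}\cdot\frac{(a+b)_n}{n!} = \binom{a+b+n-1}{k}\binom{a-1+n-k}{a-1}(a-1)!/(a-1)!$, i.e.\ I need to check that
$\frac{(a+b)_n}{(a+b)_{n-k}}=(a+b+n-k)_k$ combines with $\binom nk$ to give $\binom{a+b+n-1}{k}$ up to the factor $(a)_{n-k}/(n-k)!$, and that $(a)_{n-k}/(n-k)! = \binom{a-1+n-k}{n-k}=\binom{a-1+n-k}{a-1}$. Both are routine Pochhammer manipulations: $(a)_{m}/m! = \binom{a+m-1}{m}$, and $(a+b)_n/(a+b)_{n-k}=(a+b+n-k)(a+b+n-k+1)\cdots(a+b+n-1)$, so that $\binom nk\cdot(a+b)_n/[(a+b)_{n-k}\,k!]$... — actually the cleanest route is to avoid introducing $\binom nk$ at all: directly write the Cauchy coefficient, multiply by $(a+b)_n$, and recognize each surviving factor. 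I would present it as: coefficient extraction, then the two sub-identities $(a)_{m}/m!=\binom{a+m-1}{a-1}$ and $(a+b)_n/(a+b)_m = (a+b+m)_{\,n-m}$ with $(a+b+m)_{n-m}/(n-m)!$ absorbed appropriately.

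The case $n=0$ is immediate since $B_0^{(a,b)}(x)=\mathbb{E}(x+\mathfrak{Z}_{a,b})^0=1$ and the right side is $x^0=1$. For $n\ge 1$ the identity as stated already incorporates the $k=n$ term $\binom{a+b+n-1}{n}B_n^{(a,b)}(x)$ together with the lower terms, matching $(a+b)_n x^n/n!$ on the right; no separate normalization is needed beyond $B_0^{(a,b)}(x)=1$.

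I expect the main obstacle to be purely notational: correctly matching the two binomial coefficients $\binom{a+b+n-1}{k}$ and $\binom{a-1+n-k}{a-1}$ to the Pochhammer factors coming out of the Cauchy product, since the split of $(a+b)_n$ between "the part that pairs with $\binom nk$" and "the part that stays" must be done consistently. Once one fixes the convention $(c)_m=\Gamma(c+m)/\Gamma(c)$ and the elementary identity $\binom{c+m-1}{m}=(c)_m/m!$, everything reduces to verifying an equality of products of linear factors in $a$, $b$, which can be checked termwise. I would double-check the final form against the known $x=0$ case (Proposition 2.1 of \cite{dilcher-2002a}) and against the explicit low-order values of $\zeta_{a,b}^H$ recorded after Theorem \ref{thm-linear}.
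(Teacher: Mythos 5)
Your proposal is correct and is essentially the paper's own argument in generating-function clothing: the paper expands $\mathbb{E}\bigl(x+\mathfrak{B}_{a,b}+\mathfrak{Z}_{a,b}\bigr)^{n}=x^{n}$ by the binomial theorem and inserts the beta moments $\mathbb{E}\,\mathfrak{B}_{a,b}^{p}=(a)_{p}/(a+b)_{p}$, which yields exactly the Cauchy-product coefficient identity you extract from $e^{xz}=\Phi_{a,b}(z)\cdot e^{xz}/\Phi_{a,b}(z)$, followed by the same Pochhammer-to-binomial conversion. (Your first displayed ``bookkeeping'' equation carries a spurious factor $n!/(n-k)!$, but the corrected route you settle on --- multiply the Cauchy coefficient by $(a+b)_{n}$ and use $(a)_{m}/m!=\binom{a-1+m}{a-1}$ together with $(a+b)_{n}/(a+b)_{n-k}=(a+b+n-k)_{k}$ --- is right.)
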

\begin{proof}
Theorem \ref{thm-taylor}, with $f(x) = x^{n}$ gives 
\begin{equation}
\mathbb{E} ( x + \mathfrak{B}_{a,b} + \mathfrak{Z}_{a,b})^{n} = x^{n}.
\end{equation}
\noindent
The binomial theorem now gives 
\begin{equation}
\sum_{k=0}^{n} \binom{n}{k} \mathbb{E} ( x + \mathfrak{Z}_{a,b} )^{k}  
\mathbb{E} \mathfrak{B}_{a,b}^{n-k} = x^{n}.
\label{binomial-11}
\end{equation}
\noindent
The moments of the beta random variable $\mathfrak{B}_{a,b}$ are 
\begin{eqnarray*}
\mathbb{E} \mathfrak{B}_{a,b}^{p} & = &  \frac{1}{B(a,b)} 
\int_{0}^{1} x^{p} \cdot x^{a-1}(1-x)^{b-1} \, dx  \\
 & = & \frac{B(a+p,b)}{B(a,b)} = 
\frac{\Gamma(a+p)}{\Gamma(a+b+p)} \cdot \frac{\Gamma(a+b)}{\Gamma(a)}. 
\end{eqnarray*}
\noindent
The expression \eqref{binomial-11} is now expressed as 
\begin{equation}
\frac{\Gamma(a+b)}{\Gamma(a)} \sum_{k=0}^{n} \binom{n}{k} 
\frac{\Gamma(a+n-k)}{\Gamma(a+b+n-k)} B_{k}^{(a,b)}(x) = x^{n},
\end{equation}
\noindent
and this is equivalent to the stated result.
\end{proof}

The probabilistic approach presented here, provides a direct proof of a
symmetry property established in \cite{dilcher-2002a}. It 
extends the classical relation $B_{n}(1-x) = (-1)^{n} B_{n}(x)$ of the 
Bernoulli polynomials.

\begin{theorem}
The polynomials $B_{n}^{(a,b)}(x)$ satisfy the symmetry 
\begin{equation}
B_{n}^{(a,b)}(1-x) = (-1)^{n} B_{n}^{(b,a)}(x).
\end{equation}
\end{theorem}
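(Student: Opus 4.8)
The plan is to deduce this symmetry directly from the probabilistic representation of the hypergeometric Bernoulli polynomials together with the distributional identity for $\mathfrak{Z}_{a,b}$ already established via the Kummer transformation. Recall from \eqref{hyp-ber-poly-def} that $B_n^{(a,b)}(x) = \mathbb{E}(x + \mathfrak{Z}_{a,b})^n$, and from \eqref{symmetry-z} that $\mathfrak{Z}_{a,b}$ and $-1 - \mathfrak{Z}_{b,a}$ have the same distribution.

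First I would substitute: since $B_n^{(a,b)}(1-x) = \mathbb{E}(1 - x + \mathfrak{Z}_{a,b})^n$, and replacing $\mathfrak{Z}_{a,b}$ by its distributional equal $-1 - \mathfrak{Z}_{b,a}$ does not change the expectation of any polynomial in it (equality in distribution yields equality of all moments, which are finite here because $\Phi_{b,a}$ is entire so the moment generating function is analytic near $0$), we get
\begin{equation*}
B_n^{(a,b)}(1-x) = \mathbb{E}\bigl(1 - x - 1 - \mathfrak{Z}_{b,a}\bigr)^n = \mathbb{E}\bigl(-x - \mathfrak{Z}_{b,a}\bigr)^n = (-1)^n \, \mathbb{E}\bigl(x + \mathfrak{Z}_{b,a}\bigr)^n.
\end{equation*}
Then I would recognize the last expectation as $B_n^{(b,a)}(x)$ by \eqref{hyp-ber-poly-def}, which finishes the argument.

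As an alternative (or a cross-check) one can argue purely with generating functions: the Kummer transformation \eqref{kummer-trans} gives $\Phi_{a,b}(z) = M(a,a+b;z) = e^{z} M(b,a+b;-z) = e^{z}\,\Phi_{b,a}(-z)$, hence
\begin{equation*}
\sum_{n\ge 0} B_n^{(a,b)}(1-x)\frac{z^n}{n!} = \frac{e^{(1-x)z}}{\Phi_{a,b}(z)} = \frac{e^{-xz}}{\Phi_{b,a}(-z)} = \sum_{n\ge 0} B_n^{(b,a)}(x)\frac{(-z)^n}{n!},
\end{equation*}
and comparing coefficients of $z^n$ yields the claim. The computation is short in either form; the only point requiring a moment's care is the justification that passing from $\mathfrak{Z}_{a,b}$ to $-1-\mathfrak{Z}_{b,a}$ is legitimate inside the moment $\mathbb{E}(\,\cdot\,)^n$ (equivalently, in the generating-function version, the correct bookkeeping of signs when substituting $z \mapsto -z$), so that is the step I would write out most explicitly. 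Given the tools already developed in the paper, there is no substantial obstacle beyond this.
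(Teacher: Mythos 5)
Your proof is correct and is essentially the paper's own argument: the moment representation $B_n^{(a,b)}(x) = \mathbb{E}(x+\mathfrak{Z}_{a,b})^n$ combined with the distributional identity $\mathfrak{Z}_{a,b} = -1-\mathfrak{Z}_{b,a}$ from the Kummer transformation. The generating-function variant you sketch is a valid cross-check but adds nothing beyond the paper's route.
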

\begin{proof}
The moment representation 
\begin{equation}
B_{n}^{(a,b)}(x) = \mathbb{E}(x + \mathfrak{Z}_{a,b})^{n}
\end{equation}
\noindent
and using \eqref{symmetry-z} yields
\begin{eqnarray*}
B_{n}^{(a,b)}(1-x) & = & \mathbb{E}( 1 - x + \mathfrak{Z}_{a,b})^{n} \\
 & = & \mathbb{E}( - x - \mathfrak{Z}_{b,a})^{n} \\
 & = & (-1)^{n} \mathbb{E} ( x + \mathfrak{Z}_{b,a})^{n}.
\end{eqnarray*}
\end{proof}

The next result presents a linear recurrence for the polynomials 
$B_{n}^{(a,b)}(x)$.

\begin{theorem}
\label{thm-conjugate}
Let $X$ and $Y$ be conjugate random variables. Define the polynomials
\begin{equation}
P_{n}(z) = \mathbb{E}(z+X)^{n} \text{ and }
Q_{n}(z) = \mathbb{E}(z+Y)^{n}.
\end{equation}
\noindent
Then $P_{n}$ and $Q_{n}$ satisfy the recurrences 
\begin{equation}
P_{n+1}(z) - zP_{n}(z) = \sum_{j=0}^{n} \binom{n}{j} \kappa_{X}(j+1)
P_{n-j}(z) 
\end{equation}
\noindent
and 
\begin{equation}
Q_{n+1}(z) - zQ_{n}(z) = - \sum_{j=0}^{n} \binom{n}{j} \kappa_{X}(j+1)
Q_{n-j}(z).
\end{equation}
\end{theorem}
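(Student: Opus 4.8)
The plan is to pass to exponential generating functions in an auxiliary variable $t$, treating everything as formal power series whose coefficients are polynomials in $z$. Since $P_{n}(z) = \mathbb{E}(z+X)^{n}$, summing against $t^{n}/n!$ gives
\begin{equation*}
\sum_{n=0}^{\infty} P_{n}(z) \frac{t^{n}}{n!} = \mathbb{E}\, e^{t(z+X)} = e^{zt} \varphi_{X}(t),
\end{equation*}
and likewise $\sum_{n} Q_{n}(z) t^{n}/n! = e^{zt} \varphi_{Y}(t)$. First I would differentiate the $P$-identity with respect to $t$. From $\frac{d}{dt}\bigl(e^{zt}\varphi_{X}(t)\bigr) = z\, e^{zt}\varphi_{X}(t) + e^{zt}\varphi_{X}'(t)$, comparing the coefficient of $t^{n}/n!$ on both sides yields
\begin{equation*}
\sum_{n=0}^{\infty} \bigl( P_{n+1}(z) - z P_{n}(z) \bigr) \frac{t^{n}}{n!} = e^{zt}\varphi_{X}'(t) = \left( \sum_{n=0}^{\infty} P_{n}(z) \frac{t^{n}}{n!} \right) \cdot \frac{\varphi_{X}'(t)}{\varphi_{X}(t)}.
\end{equation*}

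The second factor on the right is $\frac{d}{dt}\log\varphi_{X}(t) = \sum_{j=0}^{\infty}\kappa_{X}(j+1) \, t^{j}/j!$ by the definition of the cumulants. Reading off the coefficient of $t^{n}/n!$ in the Cauchy (binomial) product of $\sum_{j} P_{j}(z) t^{j}/j!$ with $\sum_{j}\kappa_{X}(j+1) t^{j}/j!$ gives exactly $P_{n+1}(z) - z P_{n}(z) = \sum_{j=0}^{n}\binom{n}{j}\kappa_{X}(j+1) P_{n-j}(z)$, which is the first recurrence. For the $Q_{n}$ statement I would invoke conjugacy: by Definition \ref{def-conj} the product of the two moment generating functions is $1$, so $\varphi_{Y}(t) = 1/\varphi_{X}(t)$, hence $\log\varphi_{Y}(t) = -\log\varphi_{X}(t)$ and therefore $\kappa_{Y}(m) = -\kappa_{X}(m)$ for every $m$. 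Running the identical computation with $Y$ in place of $X$ yields $Q_{n+1}(z) - z Q_{n}(z) = \sum_{j=0}^{n}\binom{n}{j}\kappa_{Y}(j+1) Q_{n-j}(z)$, and substituting $\kappa_{Y}(j+1) = -\kappa_{X}(j+1)$ produces the claimed minus sign.

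There is no genuine obstacle in this argument; the only point needing a word of care is the legitimacy of the formal manipulations. This is immediate because each $P_{n}$ and $Q_{n}$ is a polynomial in $z$, so all the displayed generating-function identities are identities of formal power series in $t$ with coefficients in $\mathbb{R}[z]$, and coefficient extraction is valid termwise; if one prefers an analytic statement, the conjugacy relation $\varphi_{X}\varphi_{Y} = 1$ forces the moments of $X$ and $Y$ to be controlled well enough that both series converge for small $|t|$.
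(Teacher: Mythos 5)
Your proof is correct, but it follows a genuinely different route from the paper. You work at the level of exponential generating functions: you identify $\sum_{n} P_{n}(z)t^{n}/n!$ with $e^{zt}\varphi_{X}(t)$, differentiate in $t$, and recognize $\varphi_{X}'/\varphi_{X}$ as the derivative of the cumulant generating function, so the recurrence drops out of the binomial (Cauchy) product; the $Q_{n}$ case then follows from $\varphi_{X}\varphi_{Y}=1$, hence $\kappa_{Y}=-\kappa_{X}$ (note that independence of $X$ and $Y$, which is built into Definition \ref{def-conj}, is what lets you pass from $\mathbb{E}(X+Y)^{n}=\delta_{n}$ to the product formula for the moment generating functions). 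The paper instead stays entirely at the level of expectations: it introduces two independent copies $X_{1},X_{2}$ of $X$, evaluates the auxiliary quantity $\mathbb{E}\bigl[X_{1}(X_{1}+Y+z+X_{2})^{n}-X_{1}(z+X_{2})^{n}\bigr]$ in two ways, and uses the identity $\kappa_{X}(p)=\mathbb{E}\bigl[X(X+Y)^{p-1}\bigr]$ from Di Nardo, Petrullo and Senato together with $\mathbb{E}(Y+X_{2})^{j}=\delta_{j}$. Your argument is more elementary and self-contained (it needs no external cumulant formula, only the definition of $\kappa_{X}$ via $\log\varphi_{X}$), and your closing remark correctly disposes of the convergence issue by treating everything as formal power series in $t$ over $\mathbb{R}[z]$, which is legitimate since $\varphi_{X}(0)=1$ makes $\log\varphi_{X}$ a well-defined formal series. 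What the paper's route buys is a purely moment-theoretic derivation in the spirit of the conjugate-variable machinery developed in that section, avoiding any manipulation of logarithms of series; the two proofs are complementary rather than one subsuming the other.
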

\begin{proof}
Let $X_{1}$ and $X_{2}$ be two independent random variables distributed
as $X$ and let
\begin{eqnarray*}
f(z) & = & 
\mathbb{E} \left[ X_{1}(X_{1}+ Y + z + X_{2})^{n} - X_{1}(z + X_{2})^{n}
\right] \\
& = & \sum_{j=0}^{n} \binom{n}{j} \mathbb{E} \left[ X_{1}(X_{1}+Y)^{j} 
(z+ X_{2})^{n-j} \right] - \mathbb{E} X_{1} (z + X_{2})^{n} \\
& = & \sum_{j=1}^{n} \binom{n}{j} \mathbb{E} \left[ X_{1}(X_{1}+Y)^{j} 
\right] \, \mathbb{E} ( z + X_{2})^{n-j}.
\end{eqnarray*}
\noindent
Theorem $3.3$ in \cite{dinardo-2010a} shows that the cumulants satisfy 
\begin{equation}
\kappa_{X}(p) = \mathbb{E} \left[X(X+Y)^{p-1}\right], \text{ for } p \geq 1.
\end{equation}
\noindent
Therefore 
\begin{equation}
f(z) = \sum_{j=1}^{n} \binom{n}{j} \kappa_{X}(j+1)P_{n-j}(z).
\label{form-f1}
\end{equation}
\noindent
The function $f(z)$ may also be expressed as
\begin{equation}
f(z) = \sum_{j=0}^{n} \binom{n}{j} 
\mathbb{E} \left[ X_{1}(X_{1}+z)^{n-j} (Y + X_{2})^{j} \right] - 
\mathbb{E} X_{1} \, \mathbb{E} (z + X_{2})^{n}.
\end{equation}
\noindent
The relation $\mathbb{E} (Y + X_{2})^{j} = \delta_{j}$ holds since $X_{2}$ and 
$Y$ are conjugate random variables. This reduces the previous expression 
for $f$ to 
\begin{equation}
f(z) = \mathbb{E} X_{1}(X_{1}+z)^{n} - \mathbb{E} X_{1} \, P_{n}(z). 
\end{equation}
\noindent
This can be simplified using 
\begin{eqnarray*}
\mathbb{E} X_{1}(X_{1}+z)^{n} & = & 
\mathbb{E} (X_{1}+z)^{n+1} - z \mathbb{E} (X_{1}+z)^{n} \\
& = & P_{n+1}(z) - z P_{n}(z).
\end{eqnarray*}
\noindent
The function $f$ has been expressed as 
\begin{equation}
f(z) = P_{n+1}(z) - (z + \kappa_{X}(1)) P_{n}(z)
\label{form-f2}
\end{equation}
\noindent
using $\mathbb{E}(X) = \kappa_{X}(1)$. The recurrence for $P_{n}$ comes by 
comparing \eqref{form-f1} and \eqref{form-f2}. 

The second identity is obtained by replacing $X$ and $Y$ and remarking that 
$\kappa_{X}(p) = - \kappa_{Y}(p)$ and $\mathbb{E}(X+Y) = 0$, since $X$ and 
$Y$ are conjugate random variables. 
\end{proof}

\begin{theorem}
The hypergeometric Bernoulli polynomials $B_{n}^{(a,b)}(z)$ and the 
companion family $C_{n}^{(a,b)}(z)$ defined by
\begin{equation}
B_{n}^{(a,b)}(z) = \mathbb{E}(z+ \mathfrak{Z}_{a,b})^{n} \text{ and }
C_{n}^{(a,b)}(z) = \mathbb{E}(z+ \mathfrak{B}_{a,b})^{n}
\end{equation}
\noindent
satisfy the recurrences
\begin{equation}
\label{recu-forB}
B_{n+1}^{(a,b)}(z) - z B_{n}^{(a,b)}(z) = 
\sum_{j=0}^{n} \frac{n!}{(n-j)!} \zeta_{a,b}^{H}(j+1) B_{n-j}^{(a,b)}(z)
\end{equation}
\noindent
and 
\begin{equation}
C_{n+1}^{(a,b)}(z) - z C_{n}^{(a,b)}(z) = 
-\sum_{j=0}^{n} \frac{n!}{(n-j)!} \zeta_{a,b}^{H}(j+1) C_{n-j}^{(a,b)}(z).
\end{equation}
\end{theorem}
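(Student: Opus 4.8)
The plan is to deduce this directly from Theorem~\ref{thm-conjugate}, so that the present statement becomes essentially a corollary once the relevant cumulants are named. First I would observe that, by \eqref{expectation-1}, the random variables $\mathfrak{B}_{a,b}$ and $\mathfrak{Z}_{a,b}$ are conjugate in the sense of Definition~\ref{def-conj}. I would then invoke Theorem~\ref{thm-conjugate} with $X = \mathfrak{Z}_{a,b}$ and $Y = \mathfrak{B}_{a,b}$; with this identification the polynomials $P_n$ and $Q_n$ of that theorem are precisely $B_n^{(a,b)}$ and $C_n^{(a,b)}$, and its two recurrences read
\[
B_{n+1}^{(a,b)}(z) - z B_n^{(a,b)}(z) = \sum_{j=0}^{n} \binom{n}{j} \kappa_{\mathfrak{Z}_{a,b}}(j+1)\, B_{n-j}^{(a,b)}(z)
\]
together with the companion identity for $C_n^{(a,b)}$ in which $\kappa_{\mathfrak{Z}_{a,b}}(j+1)$ is replaced by $-\kappa_{\mathfrak{Z}_{a,b}}(j+1)$.

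It then remains only to compute the cumulants $\kappa_{\mathfrak{Z}_{a,b}}(m)$. Since $\mathbb{E} e^{z\mathfrak{Z}_{a,b}} = 1/\Phi_{a,b}(z)$ by Theorem~\ref{Z-def}, conjugacy forces $\kappa_{\mathfrak{Z}_{a,b}}(m) = -\kappa_{\mathfrak{B}_{a,b}}(m)$ for every $m \ge 1$, and Example~\ref{cumulants-beta} then gives $\kappa_{\mathfrak{Z}_{a,b}}(1) = -a/(a+b)$ and $\kappa_{\mathfrak{Z}_{a,b}}(m) = (m-1)!\,\zeta_{a,b}^{H}(m)$ for $m \ge 2$. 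Substituting $\kappa_{\mathfrak{Z}_{a,b}}(j+1) = j!\,\zeta_{a,b}^{H}(j+1)$ for $j \ge 1$ and using $\binom{n}{j}\,j! = n!/(n-j)!$ turns each such term into $\frac{n!}{(n-j)!}\,\zeta_{a,b}^{H}(j+1)\, B_{n-j}^{(a,b)}(z)$, while the $j=0$ term equals $\kappa_{\mathfrak{Z}_{a,b}}(1)\, B_n^{(a,b)}(z) = -\tfrac{a}{a+b}\, B_n^{(a,b)}(z)$, which fits the same pattern once one adopts the natural convention $\zeta_{a,b}^{H}(1) = -a/(a+b)$ (consistent with $B_1^{(a,b)}(0) = \mathbb{E}\mathfrak{Z}_{a,b} = -a/(a+b)$). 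This produces \eqref{recu-forB}, and the recurrence for $C_n^{(a,b)}$ follows in the same way from the companion recurrence of Theorem~\ref{thm-conjugate}, the global sign change accounting for the minus sign in front of the sum.

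I do not expect a genuine obstacle here: the analytic and combinatorial content is already packaged in Theorem~\ref{thm-conjugate} and Example~\ref{cumulants-beta}, so the work reduces to careful bookkeeping of signs and to the treatment of the first cumulant, i.e. fixing the convention for $\zeta_{a,b}^{H}(1)$. If one prefers not to extend that notation, the argument instead yields the two recurrences with the $j=0$ summand written out explicitly as $\mp\tfrac{a}{a+b}$ times $B_n^{(a,b)}(z)$, respectively $C_n^{(a,b)}(z)$.
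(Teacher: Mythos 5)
Your argument is exactly the paper's: the paper's proof is the single sentence that the result ``follows from Theorem \ref{thm-conjugate} and the cumulants for the beta distribution given in Example \ref{cumulants-beta},'' and you have simply filled in the bookkeeping (choice of $X=\mathfrak{Z}_{a,b}$, $Y=\mathfrak{B}_{a,b}$, the sign relation $\kappa_{\mathfrak{Z}_{a,b}}=-\kappa_{\mathfrak{B}_{a,b}}$, and the factor $\binom{n}{j}j!=n!/(n-j)!$). Your explicit handling of the $j=0$ term, which forces the convention $\zeta_{a,b}^{H}(1)=-a/(a+b)$ (a point the paper leaves implicit, and which sits uneasily with its earlier suggestion $\zeta_{1,b}^{H}(1)=b/(1+b)$), is correct and a worthwhile clarification.
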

\begin{proof}
\noindent
The result now follows from Theorem \ref{thm-conjugate} and the cumulants 
for the beta distribution given in Example \ref{cumulants-beta}.
\end{proof}

Our last result provides a probabilistic approach to the linear recurrences 
for the hypergeometric zeta function. 

For a random variable $X$, the moments $\mathbb{E}X^{n}$ and its cumulants 
$\kappa_{X}(n)$ satisfy the relation \eqref{mom-cum}. This is now used to 
produce a linear recurrence for the hypergeometric zeta function.

\begin{theorem}
\label{thm-linear-2}
The hypergeometric zeta function $\zeta_{a,b}^{H}$ satisfies
\begin{equation}
(n-1)! \sum_{j=2}^{n} \frac{B_{n-j}^{(a,b)}}{(n-j)!} \zeta_{a,b}^{H}(j) = 
\frac{a}{a+b} B_{n-1}^{(a,b)} + B_{n}^{(a,b)}.
\end{equation}
\end{theorem}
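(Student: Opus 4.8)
The plan is to apply the moment–cumulant relation \eqref{mom-cum} to the random variable $X = \mathfrak{Z}_{a,b}$ and read off the claimed identity. Recall that by definition \eqref{hyp-ber-numb-def} the moments of $\mathfrak{Z}_{a,b}$ are exactly the hypergeometric Bernoulli numbers, $\mathbb{E}\,\mathfrak{Z}_{a,b}^{n} = B_{n}^{(a,b)}$. The cumulants of $\mathfrak{Z}_{a,b}$ are obtained from those of $\mathfrak{B}_{a,b}$ computed in Example \ref{cumulants-beta}: since $\mathfrak{B}_{a,b}$ and $\mathfrak{Z}_{a,b}$ are conjugate (shown in the discussion around \eqref{expectation-1}), their moment generating functions are reciprocal, hence $\log\varphi_{\mathfrak{Z}_{a,b}}(z) = -\log\Phi_{a,b}(z)$ and therefore $\kappa_{\mathfrak{Z}_{a,b}}(m) = -\kappa_{\mathfrak{B}_{a,b}}(m)$ for all $m\geq 1$. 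Using Example \ref{cumulants-beta} this gives $\kappa_{\mathfrak{Z}_{a,b}}(1) = -\tfrac{a}{a+b}$ and $\kappa_{\mathfrak{Z}_{a,b}}(p) = (p-1)!\,\zeta_{a,b}^{H}(p)$ for $p\geq 2$.

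Next I would substitute these expressions into \eqref{mom-cum} written for $X = \mathfrak{Z}_{a,b}$ at index $n$:
\begin{equation*}
\kappa_{\mathfrak{Z}_{a,b}}(n) = B_{n}^{(a,b)} - \sum_{j=1}^{n-1}\binom{n-1}{j-1}\kappa_{\mathfrak{Z}_{a,b}}(j)\,B_{n-j}^{(a,b)}.
\end{equation*}
Splitting off the $j=1$ term (which contributes $-\binom{n-1}{0}\kappa_{\mathfrak{Z}_{a,b}}(1)B_{n-1}^{(a,b)} = \tfrac{a}{a+b}B_{n-1}^{(a,b)}$) and, on the left, treating the case $n\geq 2$ so that $\kappa_{\mathfrak{Z}_{a,b}}(n) = (n-1)!\,\zeta_{a,b}^{H}(n)$, rearrange to isolate the $\zeta$-terms. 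The remaining sum runs over $j=2,\ldots,n-1$ with summand $\binom{n-1}{j-1}(j-1)!\,\zeta_{a,b}^{H}(j)B_{n-j}^{(a,b)} = \tfrac{(n-1)!}{(n-j)!}\zeta_{a,b}^{H}(j)B_{n-j}^{(a,b)}$; absorbing the isolated $j=n$ term $(n-1)!\,\zeta_{a,b}^{H}(n)B_{0}^{(a,b)}$ (note $B_{0}^{(a,b)}=1$) into the sum extends the range to $j=2,\ldots,n$, producing exactly $(n-1)!\sum_{j=2}^{n}\frac{B_{n-j}^{(a,b)}}{(n-j)!}\zeta_{a,b}^{H}(j)$ on one side and $\frac{a}{a+b}B_{n-1}^{(a,b)} + B_{n}^{(a,b)}$ on the other.

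The whole argument is essentially bookkeeping once the two inputs — the moments of $\mathfrak{Z}_{a,b}$ and its cumulants — are identified, so I do not expect a serious obstacle; the one point requiring a little care is the simplification of the binomial-and-factorial coefficient $\binom{n-1}{j-1}(j-1)! = \frac{(n-1)!}{(n-j)!}$ and the correct handling of the boundary terms $j=1$ and $j=n$ so that the final index range and the right-hand side come out exactly as stated. It is also worth noting that this recurrence is the $\mathfrak{Z}$-side counterpart of the $\mathfrak{B}$-side identity displayed just before Section \ref{sec-generalized} (the one labeled as equivalent to Theorem \ref{thm-linear}), the sign difference being precisely the relation $\kappa_{\mathfrak{Z}_{a,b}}(m) = -\kappa_{\mathfrak{B}_{a,b}}(m)$.
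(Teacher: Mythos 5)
Your proposal is correct and follows essentially the same route as the paper: apply the moment--cumulant relation \eqref{mom-cum} to $\mathfrak{Z}_{a,b}$, whose moments are the $B_{n}^{(a,b)}$ and whose cumulants are $-\tfrac{a}{a+b}$ and $(p-1)!\,\zeta_{a,b}^{H}(p)$ by conjugacy with $\mathfrak{B}_{a,b}$. The paper states this tersely (and notes the alternative of setting $z=0$ in \eqref{recu-forB}); your write-up just makes the bookkeeping of the $j=1$ and $j=n$ boundary terms explicit, and it checks out.
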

\begin{proof}
Use the identity \eqref{mom-cum} to the random variable $\mathfrak{Z}_{a,b}$. 
Its moments are the hypergeometric Bernoulli numbers
\begin{equation}
\mathbb{E} \mathfrak{Z}_{a,b}^{p}  = B_{p}^{(a,b)}
\end{equation}
\noindent
and its cumulants are 
\begin{equation}
\kappa_{\mathfrak{Z}_{a,b}}(n) = 
\begin{cases}
(n-1)! \zeta_{a,b}^{H}(n), & \text{ for } 
n \geq 2 \\
- \frac{a}{a+b} & \text{ for } n = 1,
\end{cases}
\end{equation}
\noindent
since $\mathfrak{Z}_{a,b}$ and $\mathfrak{B}_{a,b}$ are conjugate random 
variables. A second proof is obtained by letting $z=0$ in \eqref{recu-forB}.
\end{proof}

\begin{note}
Surprisingly, the 
two linear recurrences for $\zeta_{a,b}^{H}$, given in Theorem 
\ref{thm-linear} and in Theorem \ref{thm-linear-2}  are different. For 
example, choosing 
$a=5, \, b=3$ these produce for $n=3$ the relations 
\begin{eqnarray*}
2 \zeta_{5,3}^{H}(3) + \frac{5}{4} \zeta_{5,3}^{H}(2) + \frac{1}{32} & = & 0 \\
2 \zeta_{5,3}^{H}(3) - \frac{5}{4} \zeta_{5,3}^{H}(2) - \frac{13}{384} & = & 0.
\end{eqnarray*}
\end{note}

\noindent
\textbf{Acknowledgements}. The work of the third author was 
partially supported by NSF-DMS 0070567. The first author is an 
undergraduate student and the second one is a graduate 
student at Tulane University, both partially supported by the same grant.

\end{document}